  \def\su#1{{\sp{[#1]}}}
  \def\<{{\langle}} 
  \def\>{{\rangle}}
  \def\eps{\varepsilon}
  \def\note#1{{}} 
 \def\can{{\rm \textsf{can}}}
  \def\note#1{} 
    \def\cL{{\mathcal L}}
   \def\cK{{\mathcal K}}  
  \def\cO{{\mathcal O}}
  \def\cV{{\mathfrak V}}
  \def\rend#1#2{{{\rm End}\sb{#1}(#2)}}
  \def\beq{\begin{equation}} 
  \def\eeq{\end{equation}}
  \def\id{\mathrm{id}} 
  \def\im{{\rm Im}}
  \def\ot{{\otimes}}
 \def\coker{\mathrm{coker}}
 \def\WP{\mathbb{WP}}
  \newcounter{zlist}
  \newcounter{blist} 
  \newenvironment{blist}{\begin{list}{(\alph{blist})}{ 
  \usecounter{blist}\leftmargin2.5em\labelwidth2em\labelsep0.5em 
  \topsep0.6ex 
  \parsep0.3ex plus0.2ex minus0.1ex}}{\end{list}} 
  \newcounter{rlist} 
  \newenvironment{rlist}{\begin{list}{(\roman{rlist})}{ 
  \usecounter{rlist}\leftmargin2.5em\labelwidth2em\labelsep0.5em 
  \topsep0.6ex 
  \parsep0.3ex plus0.2ex minus0.1ex}}{\end{list}}
\def\stac#1{\raise-.2cm\hbox{$\stackrel{\displaystyle\otimes}{\scriptscriptstyle{#1}}$}}
\def\cten#1{\raise-.2cm\hbox{$\stackrel{\displaystyle\widehat{\otimes}}
{\scriptscriptstyle{#1}}$}}
  \def\Label#1{\label{#1}\ifmmode\llap{[#1] }\else 
  \marginpar{\smash{\hbox{\tiny [#1]}}}\fi} 
  \def\Label{\label}
  \newtheorem{proposition}{Proposition}[section]
  \newtheorem{lemma}[proposition]{Lemma} 
  \newtheorem{corollary}[proposition]{Corollary} 
  \newtheorem{theorem}[proposition]{Theorem} 
  \theoremstyle{definition} 
  \newtheorem{definition}[proposition]{Definition}
  \theoremstyle{remark}
  \newcounter{c} 
  \newcommand{\etyk}[1]{\vspace{-7.4mm}$$\begin{equation}\Label{#1} 
  \addtocounter{c}{1}} 
  \renewcommand{\]}{\ifnum \value{c}=1 $$\else \end{equation}\fi} 
\def\ot{\otimes}
\def\CC{{\mathbb C}}
\def\NN{{\mathbb N}}
\def\PP{{\mathbb P}}
\def\RR{{\mathbb R}}
\def\WW{{\mathbb W}}
\def\ZZ{{\mathbb Z}}
\newcommand{\Cc}{\mathcal{C}}
\def\spec{{\mathrm{sp}}}
\def\*C{{}^*\hspace*{-1pt}{\Cc}}
\def\text#1{{\rm {\rm #1}}}
 \def\1{\mathbf{1}}
      \def\tr{\mathrm{Tr}\ }
\begin{document} 

\title{Quantum teardrops} 
 \author{Tomasz Brzezi\'nski}
 \address{ Department of Mathematics, Swansea University, 
  Singleton Park, \newline\indent  Swansea SA2 8PP, U.K.} 
  \email{T.Brzezinski@swansea.ac.uk}   
\author{Simon A.\ Fairfax}
\email{201102@swansea.ac.uk}

    \date{July 2011} 
  \subjclass[2010]{58B32; 58B34} 
  \begin{abstract} 
Algebras of functions on  quantum weighted projective spaces are introduced, and the structure of quantum weighted projective lines or  quantum teardrops are described in detail. In particular the presentation of the coordinate algebra of the quantum teardrop in terms of generators and relations and classification of irreducible $*$-representations are derived. The algebras are then analysed from the point of view of Hopf-Galois theory or the theory of quantum principal bundles. Fredholm modules and associated traces are constructed. $C^*$-algebras of continuous functions on quantum weighted projective lines are described   and their  $K$-groups computed.
  \end{abstract} 
  \maketitle

\section{Introduction}
Weighted projective spaces are examples of orbifolds that are not global quotients of manifolds by  groups. Perhaps the most elementary definition of a weighted projective space is this. Consider  the $2n+1$-dimensional sphere $S^{2n+1}$ as a subset of  $\CC^{n+1}$ of all points $(z_0,z_1,\ldots, z_{n})$ such that $\sum_i |z_i|^2 =1$. Given $n+1$ pairwise coprime numbers $l_0,\ldots l_n$, one can define the action of the group $U(1)$ of all complex numbers $u$ of modulus one on $S ^{2n+1}$ by  $(z_0,z_1,\ldots, z_{n})\cdot u = (u^{l_0}z_0,u^{l_1}z_1,\ldots, u^{l_n}z_{n})$. The weighted projective space $\WP(l_0,l_1,\ldots, l_n)$ is the quotient of $S ^{2n+1}$ by this action. For $n=1$, $\WP(1,1)$ is the two-sphere $S^2 = \CC \PP^1$, while $\WP(1,l)$ is the teardrop orbifold studied by Thurston \cite{Thu:thr}.

This formulation of weighted projective spaces makes the definition of {\em quantum weighted projective spaces}\footnote{We choose the adjective quantum rather than noncommutative to indicate that  weighted projective spaces discussed here appear in the realm of compact quantum groups and to differentiate them from noncommutative weighted projective spaces defined within algebraic geometry in \cite{AurKat:mir}.} particularly straightforward. The algebra $\cO(S^{2n+1}_q)$ of coordinate functions on the quantum sphere is the unital complex $*$-algebra with generators $z_0,z_1,\ldots ,z_n$ subject to the following relations:
\begin{subequations}\label{sph}
\begin{gather}
z_iz_j = qz_jz_i \quad \mbox{for $i<j$}, \qquad z_iz^*_j = qz_j^*z_i \quad \mbox{for $i\neq j$}, \label{sph1}\\
z_iz_i^* = z_i^*z_i + (q^{-2}-1)\sum_{m=i+1}^n z_mz_m^*, \qquad \sum_{m=0}^n z_mz_m^*=1, \label{sph2}
\end{gather}
\end{subequations}
where $q$ is a real number, $q\in (0,1)$; see \cite{VakSob:alg}. For any $n+1$ pairwise coprime numbers $l_0,\ldots, l_n$, one can define the coaction of the Hopf algebra $\cO(U(1)) = \CC [u,u^*]$, where $u$ is a unitary and grouplike generator, as
\begin{equation}\label{coaction}
\varrho_{l_0,\ldots, l_n} : \cO(S^{2n+1}_q) \to \cO(S^{2n+1}_q) \ot \CC [u,u^*] ,\qquad z_i \mapsto z_i\ot u^{l_i}, \qquad i=0,1,\ldots , n.
\end{equation}
This coaction is then extended to the whole of $\cO(S^{2n+1}_q)$ so that $\cO(S^{2n+1}_q)$ is a right $\CC [u,u^*]$-comodule algebra. The algebra 
of coordinate functions on the quantum weighted projective space is now defined as the subalgebra of $\cO(S^{2n+1}_q)$ containing all elements invariant under the coaction $\varrho_{l_0,\ldots, l_n}$, i.e.\
$$
\cO(\WP_q(l_0,l_1,\ldots, l_n)) = \cO(S^{2n+1}_q)^{co\CC [u,u^*]}:= \{x\in \cO(S^{2n+1}_q) \; |\; \varrho_{l_0,\ldots, l_n}(x) = x\ot 1\}.
$$
In the case $l_0=l_1 = \cdots = 1$ one obtains the algebra of functions on the quantum complex projective space $\CC\PP_q^{n}$ \cite{VakSob:alg}  (see also \cite{Mey:pro}).

 In this article we concentrate on the quantum weighted projective lines, i.e.\ on the case $n=1$. For a pair of coprime positive integers $k,l$, we give the presentation of $\cO(\WP_q(k,l))$ in terms of generators and relations and  classify all irreducible representations of   $\cO(\WP_q(k,l))$ (up to unitary equivalence). In particular we prove that all infinite dimensional irreducible representations are faithful. We then proceed to analyse the structure of $\cO(\WP_q(k,l))$ as  coinvariant subalgebras. We prove that $\cO(S^{3}_q)$ is a faithfully flat Hopf-Galois $\CC [u,u^*]$-extension of $\cO(\WP_q(k,l))$ or $\cO(S^{3}_q)$ is a {\em principal $\CC [u,u^*]$-comodule algebra}  with coaction $\varrho_{k,l}$ if and only if $k=l=1$. This is in perfect agreement with the classical situation where it is known that the teardrop manifolds are not global quotients of the 3-sphere. On the other hand, we prove that in the case $k=1$, $\cO(\WP_q(1,l))$ is a coinvariant subalgebra (or a base) of a principal $\CC [u,u^*]$-comodule algebra that can be identified with the coordinate algebra of the quantum lens space $\cO(L_q(l;1,l))$. We explicitly construct a suitable strong connection on $\cO(L_q(l;1,l))$ and show that $\cO(L_q(l;1,l))$ is not a cleft principal $\CC [u,u^*]$-comodule algebra (geometrically, $L_q(l;1,l)$ is not a trivial principal $U(1)$-bundle over $\WP_q(1,l)$) in two different ways. First, purely algebraically we deduce non-cleftness by studying units in the algebra $\cO(L_q(l;1,l))$. The second method involves index calculations which start with construction of Fredholm modules and associated cyclic cocycles or Chern characters $\tau_s$ on $\cO(\WP_q(k,l))$. Then, using the explicit description of strong connections in $\cO(L_q(l;1,l))$ we calculate a part of the Chern-Galois character. Finally we evaluate $\tau_s$ at the computed part of the Chern-Galois  character and show that the results are different from zero. From this we conclude that the finitely generated projective $\cO(\WP_q(1,l))$-module $\cL[1]$  (associated to $\cO(L_q(l;1,l))$) is not free, thus the principal $\CC [u,u^*]$-comodule algebra $\cO(L_q(l;1,l))$ is not cleft. Finally, we look at the topological aspects of quantum teardrops, construct  $C^*$-algebras $C(\WP_q(k,l))$ of continuous functions on  the quantum weighted projective lines and identify them as direct sums of compact operators on  (separable) Hilbert spaces with adjoined identity. Through this identification we immediately deduce the $K$-groups of $C(\WP_q(k,l))$.

Throughout we work with involutive algebras over the field of complex numbers (but the algebraic results remain true for all fields of characteristic 0). All algebras are associative and have identity.

\section{The coordinate algebra of  the quantum teardrop and its representations}\label{sec.alg} 
\setcounter{equation}{0}
The algebra of coordinate functions on the quantum three-sphere $\cO(S^{3}_q)$ is the same as the algebra of coordinate functions on the quantum group $SU(2)$, i.e.\ $\cO(S^{3}_q)=\cO(SU_q(2))$; see \cite{Wor:com}. The generators of the latter are traditionally denoted by $\alpha=z_0$ and $\beta=z_1^*$. In terms of $\alpha$ and $\beta$ relations \eqref{sph} come out as
\begin{subequations}\label{su}
\begin{gather}
\alpha\beta = q\beta\alpha, \qquad \alpha\beta^* = q\beta^*\alpha, \qquad \beta\beta^* = \beta^*\beta, \label{su.1}\\
 \alpha\alpha^* = \alpha^*\alpha +(q^{-2}-1)\beta\beta^*, \qquad
\alpha\alpha^* + \beta\beta^* =1, \label{su.2}
\end{gather}
\end{subequations}
where $q\in(0,1)$. Setting $k=l_0$ and $l=l_1$, the coaction $\varrho_{k,l}$ of $\CC [u,u^*]$ on $\cO(S^{3}_q)$ takes the form
\begin{equation}\label{rhokl}
\alpha \mapsto \alpha\ot u^k, \qquad \beta\mapsto \beta\ot {u^*}^l = \beta\ot u^{-l}, 
\end{equation}
and $\cO(\WP_q(k,l))$ is the coinvariant subalgebra of $\cO(S^{3}_q)$.

\begin{theorem}\label{thm.coinv}
(i) 
The algebra $\cO(\WP_q(k,l))$ is  the $*$-subalgebra of $\cO(S^{3}_q)$ generated by $a= \beta\beta^*$ and $b = \alpha^l\beta^k$. 

(ii) The elements $a$ and $b$ satisfy the following relations
\begin{equation}\label{wpkl}
a^*=a,\quad ab = q^{-2l}ba, \quad bb^* = q^{2kl}a^k\prod_{m=0}^{l-1}(1-q^{2m}a), \quad b^*b = a^k \prod_{m=1}^l(1-q^{-2m}a).
\end{equation}

(iii)  $\cO(\WP_q(k,l))$ is isomorphic to the $*$-algebra generated by generators $a$, $b$ and relations \eqref{wpkl}.
\end{theorem}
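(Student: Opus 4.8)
The plan is to exploit the $\ZZ$-grading that the coaction $\varrho_{k,l}$ induces on $\cO(S^3_q)$: setting $\deg\alpha=k$, $\deg\alpha^*=-k$, $\deg\beta=-l$ and $\deg\beta^*=l$, every monomial in the generators is homogeneous and $\cO(\WP_q(k,l))$ is exactly the degree-zero component. I would start from the standard PBW basis of $\cO(S^3_q)=\cO(SU_q(2))$, namely the monomials $\alpha^m\beta^p(\beta^*)^r$ with $m\geq 0$ together with $(\alpha^*)^m\beta^p(\beta^*)^r$ with $m\geq 1$ (and $p,r\geq 0$), obtained from \eqref{su} by using $\alpha\alpha^*+\beta\beta^*=1$ to remove mixed powers of $\alpha$ and $\alpha^*$. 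Since $\deg\big(\alpha^m\beta^p(\beta^*)^r\big)=km+l(r-p)$, coinvariance amounts to $km=l(p-r)$, and as $\gcd(k,l)=1$ this forces $m=lt$ and $p-r=kt$ for some integer $t\geq 0$ (respectively $r-p=kt$, $t\geq 1$, for the $\alpha^*$-monomials).

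For part (i) I would rewrite each such coinvariant monomial in terms of $a$ and $b$. Writing $\beta^p(\beta^*)^r=\beta^{kt}(\beta\beta^*)^r=\beta^{kt}a^r$ when $p-r=kt$, and collecting the $\alpha$'s with the $q$-commutation relations, the monomial $\alpha^{lt}\beta^p(\beta^*)^r$ becomes a scalar multiple of $(\alpha^l\beta^k)^t a^r=b^t a^r$; likewise the $\alpha^*$-monomials become scalar multiples of $(b^*)^t a^{p}$. Hence every coinvariant already lies in the $*$-subalgebra generated by $a=\beta\beta^*$ and $b=\alpha^l\beta^k$, which proves (i).

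Part (ii) should be a direct computation from \eqref{su}. Self-adjointness $a^*=a$ is immediate, and $ab=q^{-2l}ba$ follows by commuting $\beta\beta^*$ through $\alpha^l\beta^k$ with $\alpha\beta=q\beta\alpha$ and $\alpha\beta^*=q\beta^*\alpha$. For the two norm relations the key auxiliary identities are $\alpha\alpha^*=1-a$, $\alpha^*\alpha=1-q^{-2}a$ and $\alpha a=q^2a\alpha$, from which a one-line induction gives the telescoping products
\[
\alpha^l(\alpha^*)^l=\prod_{m=0}^{l-1}(1-q^{2m}a),\qquad (\alpha^*)^l\alpha^l=\prod_{m=1}^{l}(1-q^{-2m}a).
\]
Substituting these into $bb^*=\alpha^l a^k(\alpha^*)^l=q^{2kl}a^k\,\alpha^l(\alpha^*)^l$ and $b^*b=a^k\,(\alpha^*)^l\alpha^l$ (the last using that $(\alpha^*)^l\alpha^l$ is a polynomial in $a$ and so commutes with $\beta,\beta^*$) yields precisely \eqref{wpkl}.

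For part (iii) I would compare normal forms. Let $A$ be the abstract $*$-algebra on $a=a^*$, $b$ with relations \eqref{wpkl}; parts (i) and (ii) provide a surjective $*$-homomorphism $\phi\colon A\to\cO(\WP_q(k,l))$, so only injectivity is at issue. Using $ab=q^{-2l}ba$ together with its adjoint $ab^*=q^{2l}b^*a$ to push every $a$ to the left, and using the expressions for $bb^*$ and $b^*b$ to collapse each alternating product of $b$ and $b^*$ into a polynomial in $a$, one sees that $A$ is spanned by $\{a^nb^m,\;a^n(b^*)^m\}$. It then suffices to check these map to linearly independent elements: the computation behind (i) shows $\phi(a^nb^m)$ is a nonzero scalar times the PBW monomial $\alpha^{lm}\beta^{km+n}(\beta^*)^n$ and $\phi(a^n(b^*)^m)$ a nonzero scalar times $(\alpha^*)^{lm}\beta^n(\beta^*)^{km+n}$, and distinct pairs $(n,m)$ produce distinct PBW monomials. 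Thus the spanning set is a basis, $\phi$ is injective, and (iii) follows. The main obstacle I anticipate is organising the normal-form reduction in $A$ and matching it cleanly against the PBW basis of $\cO(S^3_q)$, so that spanning (surjectivity onto the coinvariants) and linear independence (injectivity) are secured simultaneously; the manipulations in (ii) are routine but require care to fix the powers of $q$.
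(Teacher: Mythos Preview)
Your proposal is correct and follows essentially the same route as the paper: identify the coinvariant PBW monomials via the $\ZZ$-grading to prove (i), verify \eqref{wpkl} by direct computation for (ii), and for (iii) establish the surjective $*$-map from the abstract algebra, span by the normal-form monomials $a^mb^n$, $a^m(b^*)^{n'}$, and deduce injectivity from the fact that these map to linearly independent PBW monomials in $\cO(S^3_q)$. The only cosmetic difference is that the paper invokes the Diamond Lemma to assert the normal-form set is a basis of the abstract algebra, whereas you argue spanning directly and let the linear independence of the images do the rest; the logic is equivalent.
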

\begin{proof}
(i) A basis for the space $\cO(S^{3}_q)$ consists of all elements of the form $\alpha^p\beta^r\beta^{*s}$ and $\alpha^{*p}\beta^r\beta^{*s}$, $p,r,s\in \NN$. Since the coaction is an algebra map,
$$
\varrho_{k,l}(\alpha^p\beta^r\beta^{*s}) = \alpha^p\beta^r\beta^{*s}\ot u^{kp-lr+ls}, \qquad \varrho_{k,l}(\alpha^{*p}\beta^r\beta^{*s}) = \alpha^{*p}\beta^r\beta^{*s}\ot u^{-kp-lr+ls}.
$$
The first of these elements is coinvariant provided $kp-lr+ls = 0$, i.e.\ $kp = l(s-r)$. Since $k$ and $l$ are coprime numbers, $p$ must be divisible by $l$, i.e.\ $p=ml$ for some $m\in \NN$. Therefore, $r= mk +s$ and the only coinvariant elements among the $\alpha^p\beta^r\beta^{*s}$ are those of the form 
$$
\alpha^{ml}\beta^{mk}\beta^s\beta^{*s} \sim (\alpha^l\beta^k)^m(\beta\beta^{*})^s.
$$
By similar arguments, the only coinvariant elements among terms of the form $\alpha^{*p}\beta^r\beta^{*s}$ are scalar multiples of 
$
(\alpha^l\beta^k)^{*m}(\beta\beta^{*})^r.
$ We conclude that $\cO(\WP_q(k,l))$ is a subalgebra of $\cO(S^{3}_q)$ generated by a self-adjoint  element $a= \beta\beta^{*}$ and by $b=\alpha^l\beta^k$. 

(ii) An elementary calculation that uses equations \eqref{su} confirms that \eqref{wpkl} are indeed relations that $a$ and $b$ satisfy.

(iii) Denote by $\widetilde{\cO(\WP_q(k,l))}$ the unital $*$-algebra generated by generators $a$ and $b$ and relations \eqref{wpkl}. Parts (i) and (ii)  establish the existence of the following surjective $*$-algebra map 
\begin{equation}\label{theta}
\theta: \widetilde{\cO(\WP_q(k,l))}\to \cO(\WP_q(k,l)), \qquad a\mapsto \beta\beta^*, \quad b\mapsto \alpha^l\beta^k.
\end{equation}
Note that the Diamond Lemma immediately implies that the elements
\begin{equation}\label{basis}
a^mb^n,\ a^{m}b^{*n'}, \qquad m,n\in \NN, \ n'\in \NN\setminus\{0\},
\end{equation}
form a basis for $\widetilde{\cO(\WP_q(k,l))}$. The map $\theta$ sends these elements to
$$
 (\beta\beta^*)^m(\alpha^l\beta^k)^n \sim \alpha^{ln}\beta^{k(m+n)}{\beta^*}^m,\qquad 
 (\beta\beta^*)^m({\beta^*}^k{\alpha^*}^l)^{n'} \sim {\alpha^*}^{ln'}\beta^m{\beta^*}^{k(m+n')},
$$
respectively. As these are linearly independent elements of $\cO(S^{3}_q)$, the map $\theta$ is injective, hence an isomorphism of $*$-algebras as required.
\end{proof}

In the remainder of this section we study representations of $\cO(\WP_q(k,l))$ identified with the $*$-algebra $\widetilde{\cO(\WP_q(k,l))}$ generated by $a$ and $b$ subject to  relations \eqref{wpkl}. This analysis leads to an alternative proof that the map $\theta$ \eqref{theta} is an isomorphism. 

\begin{proposition}\label{prop.rep}
Up to a unitary equivalence, the following is the list of all bounded irreducible $*$-representations of $\cO(\WP_q(k,l))$.
\begin{rlist}
\item One dimensional representation
\begin{equation}\label{rep1}
\pi_0: a\mapsto 0, \qquad b\mapsto 0.
\end{equation}
\item Infinite dimensional representations $\pi_s: \cO(\WP_q(k,l))\to \rend{}{V_s}$, labelled by $s = 1, {2},  \ldots, {l}$. For each $s$, the separable Hilbert space $V_s\simeq l^2(\NN)$ has orthonormal basis $e_p^s$, $p\in \NN$, and
\begin{equation}\label{reps}
\pi_s(a) e_p^s = q^{2(lp+s)} e_p^s, \quad \pi_s(b) e_p^s = q^{k(lp+s)} \prod_{r=1}^{l}\left(1 - q^{2(lp+s-r)}\right)^{1/2}e_{p-1}^s, \quad \pi_s(b) e_0^s =0.
\end{equation}
\end{rlist}
\end{proposition}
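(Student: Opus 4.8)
The plan is to analyse a bounded irreducible $*$-representation $\pi$ on a Hilbert space $H$ through the self-adjoint operator $\pi(a)$, using the remaining relations of \eqref{wpkl} to pin down its spectrum. Write $P(x)=x^k\prod_{m=1}^l(1-q^{-2m}x)$ and $Q(x)=q^{2kl}x^k\prod_{m=0}^{l-1}(1-q^{2m}x)$, so that $\pi(b)^*\pi(b)=P(\pi(a))$ and $\pi(b)\pi(b)^*=Q(\pi(a))$; since the left-hand sides are positive operators, functional calculus forces $P\ge0$ and $Q\ge0$ on the spectrum of $\pi(a)$. The commutation relation $ab=q^{-2l}ba$ rewrites as $\pi(b)f(\pi(a))=f(q^{2l}\pi(a))\pi(b)$ for every continuous $f$, so in terms of the spectral measure $E$ of $\pi(a)$ one has $\pi(b)E(S)=E(q^{-2l}S)\pi(b)$; thus $\pi(b)$ is a ladder operator that rescales the spectrum of $\pi(a)$ by the factor $q^{-2l}>1$ (and $\pi(b)^*$ by $q^{2l}$).

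The crucial step is to show that $\pi(a)$ has pure point spectrum lying in a single geometric progression. Since $\pi(a)$ is bounded, $\lambda=\max(\text{spec }\pi(a))$ exists. Applying the ladder relation to a short interval $(\lambda-\delta,\lambda]$ at the top: its image under multiplication by $q^{-2l}$ sits entirely above $\lambda$, hence outside the spectrum, for $\delta<\lambda(1-q^{2l})$, so $\pi(b)$ annihilates $E((\lambda-\delta,\lambda])$. Consequently $P(\pi(a))$ vanishes on this spectral subspace, which forces the spectrum there to consist of roots of $P$; as the roots are isolated, $\lambda$ is itself a root of $P$ and an isolated point of the spectrum, i.e.\ an eigenvalue. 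The same argument applied at $\min(\text{spec }\pi(a))$ shows the minimum is also a root of $P$; since all roots of $P$ are non-negative (namely $0$ and $q^{2m}$, $m=1,\dots,l$), the spectrum is non-negative and $\pi(a)\ge0$. Either $\pi(a)=0$, whence $P(0)=0$ gives $\pi(b)=0$ and irreducibility forces $\dim H=1$, i.e.\ the representation \eqref{rep1}; or $\lambda>0$ and $\lambda=q^{2s}$ for some $s\in\{1,\dots,l\}$.

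In the latter case I would pick a unit eigenvector $e_0^s$ with $\pi(a)e_0^s=q^{2s}e_0^s$ and $\pi(b)e_0^s=0$, and set $e_{p}^s\propto \pi(b)^{*p}e_0^s$. Each $e_p^s$ is an eigenvector of $\pi(a)$ for the eigenvalue $q^{2(lp+s)}$, and a short computation with $\pi(b)\pi(b)^*=Q(\pi(a))$ shows $\|\pi(b)^*e_p^s\|^2=Q(q^{2(lp+s)})>0$ for every $p$ (all factors of $Q$ being strictly positive at these arguments), so the tower never terminates and the $e_p^s$ form a genuine infinite orthonormal system (their eigenvalues being distinct). Using $\pi(b)\pi(b)^{*p}e_0^s=Q(\pi(a))\pi(b)^{*(p-1)}e_0^s$ one checks the closed span is invariant under $\pi(a),\pi(b),\pi(b)^*$, so by irreducibility it is all of $H$. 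Finally $\|\pi(b)e_p^s\|^2=\langle e_p^s,P(\pi(a))e_p^s\rangle=P(q^{2(lp+s)})$ yields exactly the coefficients in \eqref{reps} after a harmless rescaling of the basis phases, identifying $\pi$ with $\pi_s$.

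The remaining bookkeeping is routine: one verifies conversely that \eqref{reps} really does define bounded operators satisfying \eqref{wpkl} and that each $\pi_s$ is irreducible (the eigenvalues of $\pi_s(a)$ are simple and the shifts connect all basis vectors), and that $\pi_0,\pi_1,\dots,\pi_l$ are pairwise inequivalent since they are distinguished by the spectrum of $a$. The main obstacle is the spectral argument of the second paragraph — extracting discreteness of the spectrum and a highest eigenvector from the mere positivity of $P(\pi(a))$ and $Q(\pi(a))$ together with the scaling relation; once $\pi(a)$ is known to be diagonalisable with a top eigenvalue, the ladder construction and the matching of coefficients are mechanical.
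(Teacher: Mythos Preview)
Your argument is correct and reaches the same ladder construction as the paper, but the key spectral step is organised differently. The paper works with Weyl sequences: given $\lambda\in\spec(\pi(a))$ that is not a root of $P$, it pushes approximate eigenvectors through $\pi(b)$ (estimating $\|\pi(b)\xi_n\|$ from below via $P(\lambda)\neq 0$) to show $q^{-2l}\lambda\in\spec(\pi(a))$, and then invokes boundedness to force the spectrum into a single geometric progression with a genuine top eigenvector. You instead use the intertwining relation $\pi(b)E(S)=E(q^{-2l}S)\pi(b)$ for spectral projections to argue directly that $\pi(b)$ annihilates a neighbourhood of the top of the spectrum, so $P(\pi(a))$ vanishes there and the top is an isolated root of $P$. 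Your route is shorter and avoids the approximate-eigenvector estimates, at the cost of assuming fluency with Borel functional calculus; the paper's route is more hands-on.

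One point to tighten: your sentence ``the same argument applied at $\min(\spec\pi(a))$ shows the minimum is also a root of $P$'' is not quite right as stated. If the minimum $\mu$ were positive, the relevant ladder at the bottom is $\pi(b^*)$, which would make $\mu$ a root of $Q$, not $P$. What you actually need (and what your conclusion uses) is the case $\mu<0$: then $q^{-2l}[\mu,\mu+\delta)$ lies \emph{below} $\mu$, hence outside the spectrum, so $\pi(b)$ annihilates there and $P(\mu)=0$, contradicting that $P$ has no negative roots. This gives $\spec(\pi(a))\subset[0,\infty)$, after which your max argument applies cleanly whenever $\pi(a)\neq 0$. With that adjustment the proof is complete.
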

\begin{proof}
First consider the case when $\pi(a)=0$. The relation $b^* b=a^k \prod_{m=1}^{l} \mathcal(1-q^{-2m}a)$ implies that  $\pi(b)=0$, and  this is the one dimensional representation. 

Let V denote the irreducible representation space in which $\pi(a) \neq 0$. It is immediate from the relation $ab=q^{-2l}ba$ that $\ker(\pi(a))=V$ or $\ker(\pi(a))=0$. Since the first case is excluded by assumption $\pi(a) \neq 0$, we conclude $\ker(\pi(a))=0$. Suppose that the spectrum of $\pi(a)$ is discrete and consists only of 
$0,q^2,q^4,...,q^{2l}$. If $v$ is an eigenvector of $\pi(a)$ with eigenvalue $q^{2i}$, then the relation $ab=q^{-2l}ba$ implies that 
$w=\pi(b)v$ is an eigenvector with eigenvalue $q^{2i-2l}$.
Therefore the spectrum also contains $q^{2i-2l}$, which contradicts the assumption that  $0,q^2,...,q^{2l} $ are  the only elements of the spectrum of $\pi(a)$. Thus there must exist  $\lambda \in \spec(\pi(a))$ such that $\lambda \neq q^{2i}$ for $i \in \{1,2,...,l \}$. This means that there exists a sequence $(\xi_n)_{n \epsilon \NN}$ of unit vectors in the representation space V such that 
\begin{equation} \label{limit}
\lim_{n \to \infty} \lVert \pi(a)\xi_n-\lambda \xi_n \rVert = 0.
\end{equation}
We show that there exists $N\in \NN$ and $C>0$ such that $\lVert \pi(b^*)\xi_n \rVert \geq C$, for all $n \geq \NN$, using the relation $b^* b=a^k \prod_{m=1}^{l} \mathcal(1-q^{-2m}a)$. By the remainder theorem, this relation can be expressed in the following format:
$$
b^* b=a^k \prod_{m=1}^{l} \mathcal(1-q^{-2m} a) = (a-\lambda ) p(a)+\lambda^k \prod_{m=1}^{l} \mathcal (1-q^{-2m} \lambda),
$$
where $p(a)$ is a polynomial in variable $a$ of degree less than $k+l$.
The triangle inequality and the norm property $\lVert x\rVert  \lVert y\rVert \geq \lVert xy \rVert$ imply that:
\begin{equation}
\begin{split}
\lVert\pi(a^k) \prod_{m=1}^{l} \mathcal (1-q^{-2m} \pi(a))\xi_n \rVert 
&\geq\lVert \lambda^k \prod_{m=1}^{l} \mathcal (1-q^{-2m} \lambda) \rVert 
- \lVert p(a) (\pi(a)-\lambda I)\xi_n \rVert \notag \\ 
&\geq \lVert \lambda^k \prod_{m=1}^{l} \mathcal (1-q^{-2m} \lambda) \rVert 
- \lVert p(a) \rVert \rVert(\pi(a)-\lambda I)\xi_n \rVert  \notag.
\end{split}
\end{equation}
Therefore, 
\begin{equation} 
\begin{split}
\lVert \pi(b^*) \rVert \lVert \pi(b)\xi_n \rVert 
&\geq \lVert \lambda^k \prod_{m=1}^{l} \mathcal (1-q^{-2m} \lambda) \rVert 
- \lVert p(a)\rVert \rVert(\pi(a)-\lambda I)\xi_n \rVert , \notag 
\end{split}
\end{equation}
so that 
\begin{equation}
\begin{split}
\lVert \pi(b)\xi_n \rVert 
&\geq \frac{\lVert \lambda^k \prod_{m=1}^{l} \mathcal (1-q^{-2m} \lambda) \rVert}{\lVert \pi(b^*) \rVert} -\frac{\lVert p(a)\rVert }{\lVert \pi(b^*) \rVert} \lVert (\pi(a) - \lambda I)\xi_n \rVert. \notag
\end{split}
\end{equation} 
Since $\lVert \lambda^k \prod_{m=1}^{l} \mathcal (1-q^{-2m} \lambda) \rVert/ \lVert \pi(b^*) \rVert$ is positive, the existence of the desired $N$ and $C$ follows from \eqref{limit} above. Hence we conclude that 
$$ 
\eta_n:=\frac{\pi(b)\xi_n}{\lVert \pi(b)\xi_n\rVert},
$$ 
are unit vectors for $n \geq N$. Our goal now is to show that 
$$
\lim_{n \to \infty} \lVert \pi(b)\eta_n - q^{-2l} \lambda \eta_n \rVert = 0,
$$ 
which is the same as asserting $q^{-2l} \lambda \in \spec(\pi(a))$. Assuming $n \geq N$, hence $\lVert \pi(b)\xi_n \rVert \geq C$, and using the relation $ab=q^{-2l}ba$ we obtain 
\begin{equation}
\begin{split}
\lVert \pi(a)\eta_n - q^{-2l}\lambda \eta_n \rVert \notag
&= \frac{\lVert \pi(a) \pi(b) \xi_n-q^{-2l} \lambda \pi(b)\xi_n \rVert}{\lVert \pi(b)\xi_n \rVert} \notag\\ &\leq \frac{\lVert \pi(b) \rVert}{q^{2l}C} \lVert \pi(a)\xi_n - \lambda \xi_n \rVert \xymatrix{ \ar[rr]_-{n\to \infty} && 0.} \notag 
\end{split}
\end{equation}

Hence we have shown that if $\lambda \in \spec(\pi(a))$ then $ q^{-2l}\lambda \in \spec(\pi(a))$. So the spectrum contains $\lambda, q^{-2l}\lambda, q^{-4l} \lambda, ... .$ Since we require this sequence to be bounded there must exist an $n \in \NN $ such that $q^{-2nl} \lambda = \lambda_0$, the largest possible eigenvalue, i.e.\ $\lambda=q^{2nl}\lambda_0$ for some $n \in \NN $. Hence we have shown that $\spec(\pi(a)) \subset \{q^{2nl}\lambda_0 :n \in \NN \}\cup \{0\}.$
The implication of this calculation is that there exists a unit vector $\xi$ such that $\pi(a)\xi =\lambda_0 \xi$. We now use this fact to  calculate directly the representations.

It follows from the relation $ab^{*^p} = q^{2lp} b^{*^p} a$ that 
$$
\pi(a)(\pi(b^{*^p})\xi )=q^{2lp}\lambda_0\pi(b^{*^p})\xi ,
$$ 
which shows that $\pi(b^{*^p})\xi $ are eigenvectors of $\pi(a)$ which have distinct eigenvalues $q^{2lp}\lambda_0$. Thus the vectors:
$$
e_p=\frac{\pi(b^{*^p})\xi }{\lVert \pi(b^{*^p})\xi  \rVert}, \quad p \in \NN,
$$ 
form an orthonormal system. We now show that the span of the $e_p$ is closed under the action of the algebra:
$$
\pi(a)e_p=\frac{\pi(a) \pi(b^{*^p})\xi }{\lVert \pi({b^{*^p}})\xi  \rVert}=\frac{\pi(q^{2lp} b^{*^p}a)\xi }{\lVert \pi({b^{*^p}})\xi \rVert}=\frac{q^{2lp}\lambda_0 \pi(b^{*^p})\xi }{\lVert \pi(b^{*^p}) \rVert},
$$
therefore,
$$
\pi(a)e_p=q^{2lp}\lambda_0 e_p.
$$ 
On the other hand,
$$
\pi(b)e_p=\frac{\pi(b)\pi(b^{*^p})\xi }{\Vert \pi(b^{*^p})\xi \rVert}=\frac{\pi(bb^*)\pi(b^{*^{p-1}})\xi }{\lVert \pi(b^{*^p})\xi  \rVert}.
$$
Now,
\begin{equation}
\begin{split}
\lVert \pi(b^{*^p})\xi \rVert& = \langle \pi(b^{*^p})\xi ,\pi(b^{*^p})\xi \rangle^{\frac{1}{2}}\notag\\&=\langle \pi(b) \pi(b^{*^p})\xi ,\pi(b^{*^{p-1}})\xi  \rangle^{\frac{1}{2}}\notag\\&=\langle q^{2klp}\lambda_0^{2k} \prod_{r=1}^{l} \mathcal (1- \lambda_0 q^{2(l(p-1)+r)})\pi(b^{*^{p-1}})\xi ,\pi(b^{*^{p-1}})\xi  \rangle^{\frac{1}{2}}\notag\\&=q^{klp}\lambda_0^k \prod_{r=1}^{l} \mathcal(1-\lambda_0 q^{2(l(p-1)+r)})^{\frac{1}{2}} \lVert \pi(b^{*^{p-1}})\xi  \lVert.\notag
\end{split}
\end{equation}
Hence
$$
\pi(b)e_p=\frac{\pi(b) \pi(b^{*^p})\xi }{\lVert \pi(b^{*^p})\xi  \rVert}=\frac{q^{2lpk}\lambda_0^{2k} \prod_{r=1}^{l} \mathcal (1-\lambda_0 q^{2(l(p-1)+r)}) \pi(b^{*^{p-1}})}{q^{lpk}\lambda_0^k \prod_{r=1}^{l} \mathcal (1-\lambda_0 q^{2(l(p-1)+r)})^{\frac{1}{2}} \lVert \pi(b^{*^{p-1}})\rVert},
$$ 
which simplifies to
$$
\pi(b)e_p=q^{lpk}\lambda_0^k \prod_{r=1}^{l} \mathcal (1-\lambda_0 q^{2(l(p-1)+r)})^{\frac{1}{2}} e_{p-1}.
$$ 
Reversing the order of multiplication by using the substition $r'=l-r$ we arrive at the following result:
$$
\pi(b)(e_p)=q^{lpk}\lambda_0^k \prod_{r'=1}^{l} \mathcal (1-\lambda_0 q^{2(lp- r')})^{\frac{1}{2}} e_{p-1}.
$$
Considering the case when $p=0$ we see that $\pi(b)e_0=\pi(b)\xi =0$ since $\pi(b)$ acts as a stepping down operator on the basis elements $e_p$. This implies that
$$
\lambda_0^k \prod_{r'=1}^{l} (1-\lambda_0 q^{-2r'})=0,
$$
therefore $\lambda_0=0$, which corresponds to the one dimensional case, or $\lambda_0=q^{2s}$ for some $s \in \{1,2,...,l\}$, which relates to the infinite dimensional case. 
\end{proof}

\begin{proposition}\label{prop.faith}
Each of the representations $\pi_s$ is a faithful representation.
\end{proposition}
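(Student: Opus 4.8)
The plan is to exploit the Diamond Lemma basis of $\cO(\WP_q(k,l))$ from Theorem~\ref{thm.coinv}(iii), namely the monomials $a^mb^n$ and $a^mb^{*n'}$ of \eqref{basis}, together with the fact that every $\pi_s(b^n)$ and $\pi_s(b^{*n'})$ acts as a weighted shift with \emph{nonvanishing} weights. Fix $s\in\{1,\dots,l\}$ and suppose $x\in\ker\pi_s$. First I would write $x$ uniquely as a finite sum $x=\sum_{m,n}c_{m,n}\,a^mb^n+\sum_{m,n'}d_{m,n'}\,a^mb^{*n'}$ with $n\in\NN$ and $n'\in\NN\setminus\{0\}$, and then inspect the matrix coefficients $\langle e^s_{p'},\pi_s(x)e^s_p\rangle$ in the orthonormal basis $(e^s_p)_{p\in\NN}$ of $V_s$.

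The structural point is that $\pi_s$ is \emph{banded}: by \eqref{reps}, $\pi_s(a^m)$ is diagonal with entry $q^{2m(lp+s)}$ on $e^s_p$; $\pi_s(b^n)$ carries $e^s_p$ to a multiple $B^s_{p,n}$ of $e^s_{p-n}$ (and annihilates $e^s_p$ for $p<n$); and $\pi_s(b^{*n'})$ carries $e^s_p$ to a multiple $C^s_{p,n'}$ of $e^s_{p+n'}$. Consequently the net shift $p-p'$ singles out exactly one family of basis monomials: $p'=p-n$ with $n\ge1$ sees only the $c_{m,n}$, $p'=p$ sees only the $c_{m,0}$, and $p'=p+n'$ sees only the $d_{m,n'}$. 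I would then check positivity of the weights: $B^s_{p,1}=q^{k(lp+s)}\prod_{r=1}^{l}(1-q^{2(lp+s-r)})^{1/2}>0$ for $p\ge1$, since $q\in(0,1)$ and $lp+s-r\ge1$ whenever $p\ge1$, $1\le r\le l$ and $1\le s\le l$; the higher weights $B^s_{p,n}$ and $C^s_{p,n'}=B^s_{p+n',n'}$ are products of such factors, hence also nonzero.

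With the weights nonzero, the equation $\pi_s(x)=0$ decouples band by band. In each band the relevant matrix coefficient is a nonzero weight times a sum $\sum_m(\text{coeff})_m\,(q^{2t})^m$, where, as $p$ varies, the exponent $t$ runs through an infinite arithmetic progression of common difference $l$; since $q\in(0,1)$, the points $X=q^{2t}$ are infinitely many distinct reals in $(0,1)$, so the polynomial $\sum_m(\text{coeff})_m\,X^m$ vanishes at infinitely many points and is identically zero. This forces $c_{m,n}=0$ and $d_{m,n'}=0$ for all indices, whence $x=0$; thus $\ker\pi_s=0$ and $\pi_s$ is faithful.

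The positivity verification and the polynomial identity are routine. The one step that deserves care---the \emph{main obstacle}---is pinning down the band decomposition: confirming that for each fixed value of $p-p'$ precisely one of the three families of basis monomials contributes to $\langle e^s_{p'},\pi_s(x)e^s_p\rangle$, which is exactly what lets the coefficients be read off one shift at a time. Once this is secured, the argument applies verbatim to every $s\in\{1,\dots,l\}$.
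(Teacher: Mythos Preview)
Your proposal is correct and follows essentially the same route as the paper: both use the basis \eqref{basis}, separate the equation $\pi_s(x)e^s_p=0$ by the shift in $p$ (your ``bands''), observe the shift weights are nonzero for $p$ large enough, and then kill the remaining coefficients via a polynomial-in-$q^{2t}$ argument. The paper phrases the last step as invertibility of a Vandermonde matrix rather than ``infinitely many roots'', but this is the same observation.
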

\begin{proof}
We use reasoning similar to that in the proof of \cite[Proposition~1]{HajMat:gra}. Consider an arbitrary element of $\cO(\WP_q(k,l))$ expressed as a linear combination of the basis \eqref{basis},
$$
x = \sum_{m,n} \mu_{m,n} a^mb^n + \sum_{m,n'} \nu_{m,n'}a^mb^{*n'},
$$
$\mu_{m,n},\nu_{m,n'}\in \CC$, and suppose that $\pi_s(x)=0$, i.e.\ $\pi_s(x) e^s_p =0$, for all $p\in \NN$. Since the application of $\pi_s(a^mb^n)$ to $e_p^s$ does not increase the index $p$, while the application of $\pi_s(a^mb^{*n'})$ increases $p$, the vanishing condition splits into two cases, which can be dealt with separately, and we deal only with the first of them. The condition 
$$
\pi_s\left(\sum_{m,n} \mu_{m,n} a^mb^n \right) e^s_p =0, \qquad \mbox{for all}\ p\in \NN,
$$
is equivalent to the system of equations
$$
\sum_{m,n} \mu_{m,n}q^{nk\left[l\left(p -(n-1)/2\right)+s \right]}q^{2m[l(p-n)+s]}\prod_{r=1}^{ln}\left(1 - q^{2(lp+s-r)}\right)^{1/2} e^s_{p-n} = 0, \quad \mbox{for all}\ p\in \NN ,
$$
(use \eqref{wpkl} repetitively). Since this must be true for all $n\leq p$, and the vectors $e^s_{p-n}$ are linearly independent for different $n$, we obtain a system of equations, one for each $n$,
\begin{equation}\label{lin}
\sum_{m} \mu_{m,n}q^{2m[l(p-n)+s]}=0, \qquad \mbox{for all}\ p\in \NN.
\end{equation}
There are only finitely many non-zero  coefficients $\mu_{m,n}$. Let $N$ be the smallest natural number such that $\mu_{m,n}= 0$, for all $m>N$. Define 
$$
\lambda_{p,n} = q^{2[l(p-n)+s]}.
$$
Then equations \eqref{lin} for $\mu_{m,n}$ take the form
$$
\sum_{m=0}^N \mu_{m,n}\lambda_{p,n}^m=0, \qquad \mbox{for all}\ p\in \NN.
$$
The matrix of coeffcients of the first $N+1$ equations (for $p=0,1,\ldots , N$) has the Vandermonde form, and is invertible since  $\lambda_{p,n}\neq \lambda_{p',n}$ if $p\neq p'$ (remember that $q\in( 0,1)$). Therefore, $\mu_{m,n} = 0$ is the only solution to  \eqref{lin}. Similarly one proves that necessarily $\nu_{m,n'} =0$ and concludes that $\pi_s$ is an injective map.
\end{proof}

Finally, we look at the way representations of $\cO(\WP_q(k,l))$ are related to representations of the quantum sphere algebra $\cO(S^{3}_q)$.

\begin{proposition}\label{lemma.direct}
Let $\pi : \cO(S^{3}_q) \to \rend{} V$ denote the  representation  of $\cO(S^{3}_q)$ given on an orthonormal basis $e_n$, $n\in \NN$ of $V$ by \cite{VakSob:su2}
\begin{equation}\label{su2rep}
{\pi}(\alpha)e_n = \left(1-q^{2n}\right)^{1/2}e_{n-1}, \qquad {\pi}(\beta)e_n = q^{n+1}e_n, 
\end{equation}
(see  also \cite{Wor:twi}). 
Then there exists an algebra isomorphism $\phi :  \rend{} {\bigoplus_{s=1}^{l}V_s} \to \rend{} V$ rendering commutative the following diagram, 
\begin{equation}\label{diag}
\xymatrix {{\cO(\WP_q(k,l))} \ar[rr]^-\theta \ar[d]_{\oplus_s \pi_s} && \cO(S^{3}_q) \ar[d]^\pi \\
 \rend{} {\bigoplus_{s=1}^{l}V_s} \ar[rr]^\phi && \rend{} V,}
\end{equation}
where $\theta$ is the $*$-algebra map given by formulae \eqref{theta}. Consequently, $\theta$ is a monomorphism.
\end{proposition}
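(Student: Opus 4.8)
The plan is to realise $\phi$ as conjugation by a unitary isomorphism of Hilbert spaces, so that it is automatically an algebra isomorphism. First I would compute the composite representation $\pi\circ\theta$ of $\cO(\WP_q(k,l))$ on $V$ directly from the definitions \eqref{theta} and \eqref{su2rep}. Since $\pi(\beta)$ is the self-adjoint diagonal operator $e_n\mapsto q^{n+1}e_n$, one gets $\pi(\theta(a))e_n=\pi(\beta\beta^*)e_n=q^{2(n+1)}e_n$, while iterating $\pi(\alpha)e_n=(1-q^{2n})^{1/2}e_{n-1}$ yields
$$
\pi(\theta(b))e_n=\pi(\alpha)^l\pi(\beta)^k e_n=q^{k(n+1)}\prod_{j=0}^{l-1}\left(1-q^{2(n-j)}\right)^{1/2}e_{n-l}.
$$

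Next I would compare these with the formulae \eqref{reps} for $\oplus_s\pi_s$. The eigenvalue of $\pi(\theta(a))$ on $e_n$ is $q^{2(n+1)}$, whereas that of $\pi_s(a)$ on $e_p^s$ is $q^{2(lp+s)}$, which dictates the substitution $n=lp+s-1$. Crucially, the assignment $(p,s)\mapsto lp+s-1$ is a bijection $\NN\times\{1,\dots,l\}\to\NN$, so the prescription $U(e_p^s):=e_{lp+s-1}$ extends to a unitary $U:\bigoplus_{s=1}^{l}V_s\to V$; I then set $\phi(T):=UTU^{-1}$. Commutativity of \eqref{diag} is then equivalent to the intertwining identity $\pi(\theta(x))\,U=U\,(\oplus_s\pi_s)(x)$ for all $x$, and it suffices to check this on the generators $a$ and $b$, the case of $b^*$ following by taking adjoints since $U$ is unitary and $\theta$ is a $*$-map.

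The verification for $a$ is immediate from the eigenvalue matching, and the substantive step is the one for $b$. Inserting $n=lp+s-1$ into the formula above, the scalar $q^{k(n+1)}$ becomes $q^{k(lp+s)}$, the target vector $e_{n-l}$ becomes $e_{l(p-1)+s-1}=U(e_{p-1}^s)$, and the reindexing $r=j+1$ converts $\prod_{j=0}^{l-1}(1-q^{2(n-j)})^{1/2}$ into $\prod_{r=1}^{l}(1-q^{2(lp+s-r)})^{1/2}$, matching \eqref{reps} on the nose. The boundary case $p=0$ needs separate care: there $n=s-1$, and the factor with $j=s-1$ (which lies in range because $s-1\in\{0,\dots,l-1\}$) equals $(1-q^{0})^{1/2}=0$, so $\pi(\theta(b))e_{s-1}=0=U(\pi_s(b)e_0^s)$, consistent with $\pi_s(b)e_0^s=0$. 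I expect the main obstacle to be exactly this bookkeeping—aligning the two products under the index shift and confirming the kill at $p=0$—but once the arithmetic is set up correctly the identity is automatic.

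Finally, to conclude that $\theta$ is a monomorphism I would argue as follows. Each $\pi_s$ is faithful by Proposition~\ref{prop.faith}, hence so is $\oplus_s\pi_s$. If $\theta(x)=0$, then commutativity of \eqref{diag} gives $\phi\big((\oplus_s\pi_s)(x)\big)=\pi(\theta(x))=0$; as $\phi$ is an isomorphism this forces $(\oplus_s\pi_s)(x)=0$, and faithfulness of $\oplus_s\pi_s$ yields $x=0$. Thus $\theta$ is injective, giving an independent proof of the injectivity in Theorem~\ref{thm.coinv}(iii).
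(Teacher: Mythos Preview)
Your proof is correct and follows essentially the same approach as the paper: both construct the unitary $U:e_p^s\mapsto e_{lp+s-1}$, define $\phi$ as conjugation by $U$, verify commutativity on the generators by the same direct computation, and deduce injectivity of $\theta$ from the faithfulness of $\oplus_s\pi_s$. Your treatment is slightly more explicit about the reindexing of the product and the boundary case $p=0$, but the argument is the same.
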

\begin{proof}
Consider the vector space isomorphism,
$$
\hat{\phi} : \bigoplus_{s=1}^{l}V_s \to V, \qquad e_p^s\mapsto e_{lp+s-1},
$$
and let $\phi: \rend{} {\bigoplus_{s=1}^{l}V_s}\to \rend{} V$, $f\mapsto \hat{\phi}\circ f\circ \hat{\phi}^{-1}$,  be the induced algebra isomorphism. Using \eqref{su2rep} one easily finds that
$$
\pi(\alpha^l\beta^k) e_n = q^{k(n+1)}\prod_{r=1}^{l}\left(1 - q^{2(n-r+1)}\right)^{1/2}e_{n-1}, \qquad \pi(\beta\beta^*) e_n = q^{2(n+1)}e_n.
$$
This immediately implies that, for all $x\in \cO(\WP_q(k,l))$,
$$
\hat{\phi}\left(\pi_s(x)e_p^s\right) = \pi\left(\theta(x)\right) \hat{\phi}(e_p^s).
$$
Therefore, the induced map $\phi$ makes the diagram \eqref{diag} commute as required.

By Proposition~\ref{prop.faith}, the representations $\pi_s$ are faithful, so the map $\oplus_s \pi_s: \cO(\WP_q(k,l))\to \rend{} {\bigoplus_{s=1}^{l}V_s}$ is injective. The commutativity of \eqref{diag} implies that $\theta$ is an injective map.
\end{proof}

\section{Quantum teardrops and quantum principal bundles}
\setcounter{equation}{0}
\begin{definition}\label{def.princ}
Let $H$ be a Hopf algebra with bijective antipode and let $A$ be a right $H$-comodule algebra with coaction $\varrho: A\to A\ot H$. Let $B$ denote the coinvariant subalgebra of $A$, i.e.\ $B=A^{coH}:= \{b\in A\; |\; \varrho(b) = b\ot 1\}$. We say that $A$ is a {\em principal $H$-comodule algebra} if:
\begin{blist}
\item the coaction $\varrho$ is free, that is the canonical map
$$
\can: A\ot_B A\to A\ot H, \qquad a\ot a'\mapsto a\varrho(a'),
$$
is bijective (the Hopf-Galois condition);
\item there exists a strong connection in $A$, that is  the multiplication map
$$
B\ot A \to A, \qquad b\ot a\mapsto ba,
$$
splits as a left $B$-module and right $H$-comodule map (the equivariant projectivity).
\end{blist}
\end{definition}
As indicated in \cite{Sch:pri}, \cite{BrzMaj:gau} or \cite{Haj:str} and argued more recently in e.g.\ \cite{Brz:syn} or \cite{HajKra:pie}, principal comodule algebras should be understood as principal bundles in noncommutative geometry. In the first part of this section we prove that only in the case of  the quantum 2-sphere (i.e.\ $k=l=1$), $\cO(S_q^3)$ is a principal $\cO(U(1))$-comodule algebra over  $\cO(\WP_q(k,l))$. In that case we are dealing with the well-known quantum version of the Hopf fibration.

\begin{theorem}\label{H-G}
The algebra of $\cO(S_q^3)$ is a principal $\cO(U(1))$-comodule algebra over $\cO(\WP_q(k,l))$ by the coaction $\varrho_{k,l}$ if and only if $k=l=1$.
\end{theorem}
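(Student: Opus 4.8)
The plan is to prove the two implications separately, and the heart of the matter is the forward direction: showing that principality forces $k=l=1$. For the easy direction, when $k=l=1$ the coaction $\varrho_{1,1}$ sends $\alpha\mapsto\alpha\ot u$ and $\beta\mapsto\beta\ot u^{-1}$, and $\cO(S_q^3)=\cO(SU_q(2))$ with the standard $\cO(U(1))$-coaction is the classical quantum Hopf fibration; I would cite the well-known fact (e.g.\ from the quantum principal bundle literature) that this is a principal $\cO(U(1))$-comodule algebra, or alternatively exhibit an explicit strong connection and verify bijectivity of $\can$ directly on generators. Since $\cO(U(1))=\CC[u,u^*]$ has bijective (indeed involutive) antipode, the definition applies.

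For the forward direction I would argue by contraposition: assume $(k,l)\neq(1,1)$ and show the Hopf--Galois condition (a) must fail, i.e.\ $\can$ is not surjective. The key structural observation is grading: $\cO(S_q^3)$ is $\ZZ$-graded by the $\CC[u,u^*]$-coaction, with the degree-$n$ component $\cO(S_q^3)_n$ spanned by monomials whose image under $\varrho_{k,l}$ carries $u^n$. Surjectivity of $\can$ onto $A\ot H = \cO(S_q^3)\ot\CC[u,u^*]$ is equivalent, degree by degree, to the assertion that the multiplication-induced maps $\cO(S_q^3)_{-n}\ot_B\cO(S_q^3)\to\cO(S_q^3)\ot\CC u^n$ hit $1\ot u^n$; concretely, $\can$ is surjective if and only if $u^n$ lies in the image, which amounts to solvability of $\sum_i a_i\,\varrho_{k,l}(a_i')=1\ot u^n$ for each $n$. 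The obstruction appears for $n$ not a multiple of the relevant weight: from the proof of Theorem~\ref{thm.coinv}(i), the degree-$1$ component of $\cO(S_q^3)$ under $\varrho_{k,l}$ is \emph{empty} (nonzero only in degrees that are integer combinations of $k$ and $-l$ realisable by the monomial exponents), so when $k>1$ or $l>1$ there is no element of degree $1$, and one cannot produce $1\ot u$ in the image of $\can$.

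The cleanest way to turn this into a proof is to invoke the standard criterion that for a $\ZZ$-graded algebra the Hopf--Galois condition over $\CC[u,u^*]$ is equivalent to \emph{strong gradedness}, i.e.\ $\cO(S_q^3)_m\cdot\cO(S_q^3)_{m'}=\cO(S_q^3)_{m+m'}$ for all $m,m'$, equivalently $\cO(S_q^3)_n\cdot\cO(S_q^3)_{-n}=\cO(S_q^3)_0=B$ together with $1\in\cO(S_q^3)_n\cdot\cO(S_q^3)_{-n}$ for every $n\in\ZZ$. So I would compute the image of the grading map on the basis $\alpha^p\beta^r\beta^{*s}$ and $\alpha^{*p}\beta^r\beta^{*s}$: the attainable degrees are $\{kp-lr+ls\}$ and $\{-kp-lr+ls\}$. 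Degree $1$ is attainable only if $1\in k\ZZ+l\ZZ=\ZZ$ (always true for the lattice) but \emph{with the sign and nonnegativity constraints} $p,r,s\ge 0$ the element of degree $1$ must involve at least one generator, and the minimal positive degree of a single generator is $\min(k,l)$; more precisely I would show $\cO(S_q^3)_1=0$ unless $\min(k,l)=1$ and pin down that $1\in\cO(S_q^3)_1\cdot\cO(S_q^3)_{-1}$ fails whenever $(k,l)\neq(1,1)$.

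The main obstacle is making the non-surjectivity argument airtight rather than merely plausible: I must verify that $\can$ genuinely fails to be surjective, not just that a naive generator is missing. The safe route is the strong-gradedness reformulation, for which I would need the lemma that a $\ZZ$-graded algebra $A$ with $A=\bigoplus_n A_n$ is a Hopf--Galois $\CC[u,u^*]$-extension of $A_0$ \emph{iff} $A$ is strongly graded; granting that, the whole theorem reduces to the purely combinatorial check that $A_1 A_{-1}=A_0$ holds for all degrees precisely when $k=l=1$. I would then confirm that strong gradedness also supplies the strong connection (condition (b)) automatically in the $k=l=1$ case, closing the equivalence. The combinatorial check is routine; the conceptual work is setting up the grading dictionary correctly so that surjectivity of $\can$ is transparently equivalent to $1\in A_nA_{-n}$ for all $n$.
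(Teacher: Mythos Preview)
Your strong-gradedness reformulation is correct and is precisely the conceptual lens behind the paper's argument: showing that $1\ot u$ is not in the image of $\can$ is exactly the statement $1\notin A_{-1}A_1$ for the $\ZZ$-grading induced by $\varrho_{k,l}$. So the overall strategy is sound and matches the paper.

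However, your key intermediate claim is false. The degree-$1$ component $\cO(S_q^3)_1$ is \emph{never} zero for coprime $(k,l)$: the monomials $\alpha^p\beta^r\beta^{*s}$ and $\alpha^{*p}\beta^r\beta^{*s}$ realise every degree in $k\ZZ+l\ZZ=\ZZ$, since $s-r$ ranges over all of $\ZZ$ and the $\alpha$- and $\alpha^*$-families together let the $\alpha$-exponent range over $\ZZ$. For instance, when $(k,l)=(2,3)$ the element $\alpha^2\beta$ has degree $2\cdot 2-3=1$. So the case $\min(k,l)>1$ is not dispatched by the vanishing argument you propose, and your observation about the ``minimal positive degree of a single generator'' is irrelevant once products are allowed.

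This means the entire weight of the proof rests on the step you dismissed as a ``routine combinatorial check'': establishing $1\notin A_{-1}A_1$ when $(k,l)\neq(1,1)$. That is where all the content lies. One must determine which products of degree-$(-1)$ and degree-$1$ basis elements can contribute a nonzero scalar multiple of the identity; the paper does this by writing a general preimage under $\can$ in the monomial basis, reducing the resulting $\alpha^m\alpha^{*n}$ and $\alpha^{*n}\alpha^m$ via the relations \eqref{su.2} to polynomials in $\beta\beta^*$, and then tracking the Diophantine constraints on the exponents. These constraints force $k=1$ (from the $\alpha^*\alpha$ terms) and, separately, $l=1$ (from the $\beta\beta^*$ terms). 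Your grading framework organises this analysis cleanly but does not bypass it; you still owe exactly this argument.
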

\begin{proof}
If $k=l=1$, then $\cO(\WP_q(1,1)) = \cO(S_q^2)$, and it is known that $\cO(S_q^3)$ is a principal comodule algebra that describes the quantum Hopf fibration (over the standard Podle\'s sphere); see \cite{BrzMaj:gau} or \cite{HajMaj:pro}. We assume, therefore, that $k\neq l$ (i.e.\ $(k,l)\neq (1,1)$), and show that $1\otimes u$ is not in the image of the canonical map in that case. 
We proceed by identifying a basis for $\cO(SU_q(2)) \otimes \cO(SU_q(2))$ and applying the canonical map to observe the form of the codomain.

A basis for $\cO(SU_q(2)) \otimes \cO(SU_q(2))$ consists of
\begin{equation}
\begin{split}
\alpha^h \beta^m {\beta^*}^n \otimes \alpha^{\bar{h}} \beta^{\bar{m}} {\beta^*}^{\bar{n}}, \qquad & \ \ \notag 
\alpha^h \beta^m {\beta^*}^n \otimes \beta^{\bar{m}} {\beta^*}^{\bar{n}} {\alpha^*}^{\bar{p}},\\
\beta^m {\beta^*}^n {\alpha^*}^p \otimes \alpha^{\bar{h}} \beta^{\bar{m}} {\beta^*}^{\bar{n}}, \qquad & \ \ \notag
\beta^m {\beta^*}^n {\alpha^*}^p \otimes \beta^{\bar{m}} {\beta^*}^{\bar{n}} {\alpha^*}^{\bar{p}},
\end{split}
\end{equation}
where $h,m,n,\bar{h}, \bar{m}, \bar{n}, \bar{p} \in \NN$. 
Hence, applying the canonical map we conclude that every element in the image of $\can$ is a linear combination of:
\begin{equation}\label{im.can}
\begin{split}
\alpha^{h+\bar{h}} \beta^{m+\bar{m}} {\beta^*}^{n+\bar{n}} \otimes u^{k \bar{h}-l \bar{m}+l\bar{n}} \ , \quad & 
\beta^{m+\bar{m}} {\beta^*}^{n+\bar{n}} \alpha^h {\alpha^*}^{\bar{p}} \otimes u^{-l\bar{m}+l\bar{n}-k\bar{p}} \ , \ \\
\beta^{m+\bar{m}} {\beta^*}^{n+\bar{n}} {\alpha^*}^p \alpha^{\bar{h}} \otimes u^{k\bar{h}-l\bar{m}+l\bar{n}} \ , \quad & 
\beta^{m+\bar{m}} {\beta^*}^{n+\bar{n}} {\alpha^*}^{p+\bar{p}} \otimes u^{-l\bar{m}+l\bar{n}-k\bar{p}} \ ,
\end{split}
\end{equation}
where 
$
h,m,n,\bar{h}, \bar{m}, \bar{n}, \bar{p} \in \NN.
$

To obtain identity in the first leg we must use one of the following relations \eqref{su.2} or equations which include terms of the form ${\alpha^*}^n \alpha^m$ or $\alpha^n {\alpha^*}^m$. A straightforward calculation gives the following: 
\begin{equation}
\alpha^m {\alpha^*}^n=
\begin{cases}
\prod_{p=1}^{m} \mathcal(1-q^{2p-2}\beta {\beta^*}) & \mbox{when \ $m=n$,}\\
\alpha^{m-n} \prod_{p=1}^{m} \mathcal(1-q^{2p-2}\beta {\beta^*}) & \mbox{when \ $m > n$,}\\
\prod_{p=1}^{m} \mathcal(1-q^{2p-2}\beta {\beta^*}) {\alpha^*}^{n-m}  & \mbox{when\ $n > m$,}
\end{cases}\notag
\end{equation}
and
\begin{equation}
{\alpha^*}^n \alpha^m=
\begin{cases}
\prod_{p=1}^{m} \mathcal(1-q^{-2p}\beta {\beta^*}) &\mbox{when\ $m=n$,}\\
{\alpha^*}^{m} \prod_{p=1}^{m} \mathcal(1-q^{-2p}\beta {\beta^*}) & \mbox{when \ $n > m$,}\\
\prod_{p=1}^{m} \mathcal(1-q^{-2p}\beta {\beta^*}) \alpha^{m-n}  & \mbox{when \ $n > m$.}
\end{cases}\notag
\end{equation}

We see that to obtain identity in the first leg we require the powers of $\alpha$ and ${\alpha^*}$ to be equal. We now construct all possible elements of the domain which map to $1 \otimes u$ after applying the canonical map.

\underline{Case 1}: Use the second term in \eqref{im.can} to obtain $\alpha^N {\alpha^*}^N$. In this case: $h=\bar{p}=N$, $n+\bar{n}=m+\bar{m}=0$. Since $n, \bar{n},m,\bar{m} \in \NN$ we must have $n=\bar{n}=m=\bar{m}=0$. Also $-l\bar{m}+l\bar{n}-k\bar{p}=1$, which implies that $-k \bar{p}=1$, hence there are no possible terms. 
 
\underline{Case 2}: Use the third term in \eqref{im.can} to obtain ${\alpha^*}^N \alpha^N$. In this case $\bar{h}=p=N$, $m=\bar{m}=n=\bar{n}=0$. Also $k \bar{h}-l\bar{m}+l\bar{n}=1$, which implies that  $k \bar{h}=1$, hence $k=1$ and $\bar{h}=1$. Therefore, the only terms of the form ${\alpha^*}^N \alpha^N$ are when $N=1$ and in this case $k=1$. We now look at the other terms which are of the form $\beta {\beta^*}$ so that we can use the relation ${\alpha^*} \alpha + q^{-2} \beta {\beta^*} =1$. Four possibilities need be considered, one for each of the terms in \eqref{im.can}. 
In the case of the first of these terms $h=\bar{h}=0$, $m+\bar{m}=1$, $n+\bar{n}=1$ and  $k \bar{h}-l\bar{m}+l\bar{n}=1$, which implies that  $ l(\bar{n}-\bar{m})=1$, hence $ l=1$ and $\bar{n}-\bar{m}=1.$ The only solution is: $l=1$, $n=\bar{m}=0$, $m=\bar{n}=1$, $h=\bar{h}=0.$ A similar approach can be used when considering the remaining terms in \eqref{im.can} to conclude that in all four cases one is forced to require $l=1$. Therefore, it is impossible to obtain a term of the form $1 \otimes u$ when both $k$ and $l$ are not simultaneously equal to one. This implies that $\cO(\WW \PP_q(k,l)) \subseteq \cO(SU_q(2))$ is not a Hopf-Galois extension when $k$ and $l$ are not both one.
\end{proof}

Theorem~\ref{H-G} asserts that the defining action \eqref{rhokl} of $U(1)$ on the quantum group $SU_q(2)$ does not make it a total space of a quantum $U(1)$-principal bundle over the quantum weighted projective space $\WW\PP_q(k,l)$, unless $k=l=1$ (the case of the quantum Hopf fibration). The remainder of this section  is devoted to construction of a quantum $U(1)$-principal bundle over the quantum teardrop $\WW\PP_q(1,l)$ with the total space provided by the {\em quantum lens space} $L_q(l;1,l)$. 

The coordinate algebra of the {\em quantum lens space} $\cO(L_q(l;1,l))$ is defined as follows \cite{HonSzy:len}. The coordinate (or group) algebra of the cyclic group $\ZZ_l$, $\cO(\ZZ_l)$, is a Hopf $*$-algebra generated by a unitary grouplike element $w$ satisfying  $w^l=1$. $\cO(SU_q(2))$ is a right $\cO(\ZZ_l)$-comodule $*$-algebra with the coaction 
$$
\varrho: \cO(SU_q(2))\to \cO(SU_q(2))\ot \cO(\ZZ_l), \qquad \alpha\mapsto \alpha\ot w, \quad \beta\mapsto \beta\ot 1.
$$
$\cO(L_q(l;1,l))$ is defined as the coinvariant subalgebra of $\cO(SU_q(2))$ under the coaction $\varrho$. Following similar arguments to those in Section~\ref{sec.alg} one easily finds that $\cO(L_q(l;1,l))$ is a $*$-subalgebra of $\cO(SU_q(2))$ generated by  $c := \alpha^l$ and $d:=\beta$. These elements satisfy the following relations:
\begin{subequations} \label{rel.lens}
\begin{gather}
cd = q^{l}dc, \qquad cd^* = q^{l}d^*c, \qquad dd^* = d^*d, \label{rel.lens1}\\
cc^* = \prod_{m=0}^{l-1}(1-q^{2m}dd^*), \qquad c^*c = \prod_{m=1}^l(1-q^{-2m}dd^*). \label{rel.lens2}
\end{gather}
\end{subequations}
Universally, $\cO(L_q(l;1,l))$ can be defined as a $*$-algebra generated by $c$ and $d$ subject to relations \eqref{rel.lens}. 

Again, following the same techniques as in Section~\ref{sec.alg} one can classify -- up to unitary equivalence -- all irreducible $*$-representations of $\cO(L_q(l;1,l))$. There is a family of one-dimensional representations $\pi_0^\lambda$ defined as
$$
\pi_0^\lambda(c) = \lambda, \qquad \pi_0^\lambda(d) = 0, \qquad  \lambda \in \CC,\; |\lambda|=1.
$$
For every $s=1,2,\ldots , l$, there is a family of infinite-dimensional representations $\pi_s^\lambda$, $\lambda\in \CC$, $|\lambda|=1$. The action of $\pi_s^\lambda$ on an orthonormal basis $e_p^{\lambda, s}$, $p\in \NN$,  for its representation space $V^\lambda_s\simeq l^2(\NN)$ is given by
$$
\pi_s^\lambda(c) e_p^{\lambda, s} =\prod_{m=1}^l(1-q^{2(pl+s-m)})^{1/2}e_{p-1}^{\lambda, s}, \qquad 
 \pi_s^\lambda(d) e_p^{\lambda, s} = \lambda q^{pl+s}e_p^{\lambda, s}.
$$
As for quantum teardrops, there is a vector space isomorphism
$$
{\phi}^\lambda : \bigoplus_{s=1}^{l}V_s^\lambda \to V^\lambda, \qquad e_p^{\lambda,s}\mapsto e_{lp+s-1}^\lambda,
$$
which embeds the direct sum of representations $\pi_s^\lambda$ in the representation $\pi^\lambda$ of $\cO(SU_q(2))$,
$$
{\pi}^\lambda(\alpha)e_n^\lambda = \left(1-q^{2n}\right)^{1/2}e_{n-1}^\lambda, \qquad {\pi}^\lambda(\beta)e_n^\lambda = \lambda q^{n+1}e_n^\lambda.
$$
Here $e_n^\lambda$, $n\in \NN$, is an orthonormal basis for the representation space $V^\lambda$.

$\cO(L_q(l;1,l))$ is a right comodule algebra over the Hopf algebra $\cO(U(1)) = \CC[u,u^*]$, $u^{-1} = u^*$. The coaction $\varrho_l: \cO(L_q(l;1,l))\to \cO(L_q(l;1,l))\ot \cO(U(1))$ is given on generators $c$ and $d$ by
$$
\varrho_l: c\mapsto c\ot u , \qquad d\mapsto d\ot u^*.
$$
It is an easy exercise to check that 
$$
\cO(L_q(l;1,l))^{co \cO(U(1))} \simeq \cO(\WW\PP_q(1,l)),
$$ 
through the identification $a = cd$, $b= dd^*$.

\begin{theorem}\label{thm.H-G}
The coordinate algebra of the quantum lens space $\cO(L_q(l;1,l))$ is a principal $\cO(U(1))$-comodule algebra over $\cO(\WW\PP_q(1,l))$.
\end{theorem}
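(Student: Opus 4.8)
The plan is to verify the two defining conditions of a principal comodule algebra (Definition~\ref{def.princ}) for the coaction $\varrho_l$ on $\cO(L_q(l;1,l))$. The most efficient route exploits a general principle: for a comodule algebra over a Hopf algebra with bijective antipode, it suffices to exhibit a \emph{strong connection}, i.e.\ a unital, left $B$-linear, right $H$-colinear splitting of the multiplication map, since the existence of such a splitting is known to imply both equivariant projectivity (condition (b), by definition) and freeness of the coaction (condition (a), the Hopf--Galois property). Concretely, I would seek an algebra map $\ell : \cO(U(1)) \to \cO(L_q(l;1,l)) \ot \cO(L_q(l;1,l))$ with $\ell(1) = 1\ot 1$, satisfying $\m \circ \ell = \eta\circ\eps$ (so $\ell$ lifts the unit) and the colinearity condition $(\ell \ot \id)\circ \Delta = (\id \ot \varrho_l)\circ \ell$ together with its left-hand analogue. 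Because $\cO(U(1)) = \CC[u,u^*]$ is generated as an algebra by the grouplike $u$ (and $u^* = u^{-1}$), it is enough to specify $\ell(u)$ and $\ell(u^*)$ and then extend multiplicatively.

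The key computational step is to guess and verify the correct value of $\ell(u)$. Since $\varrho_l(c) = c\ot u$ and $\varrho_l(d) = d\ot u^*$, a natural first attempt is $\ell(u) = c^*\ot c$ or a combination built from $c,c^*,d,d^*$; but the relations \eqref{rel.lens2} show that $c^*c$ and $cc^*$ are not equal to $1$, only to the polynomials $\prod_{m=1}^l(1-q^{-2m}dd^*)$ and $\prod_{m=0}^{l-1}(1-q^{2m}dd^*)$ respectively. The guess must therefore be corrected so that applying $\m$ returns $1$. The idea is to use the relation $c^*c = \prod_{m=1}^l(1-q^{-2m}dd^*)$ to express $1$ as a combination involving $c^*c$ and powers of $dd^*$, and then to lift each monomial across the tensor factor. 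Explicitly, one writes $1 = c^*c + \big(1 - \prod_{m=1}^l(1-q^{-2m}dd^*)\big)$ and iterates, replacing the degree-in-$dd^*$ remainder by terms in which a $d$ or $d^*$ has been moved into the second tensor leg, so that the $u$-grading $+1$ is maintained in the right leg at every stage. This produces $\ell(u)$ as an explicit finite sum; $\ell(u^*)$ is then obtained by a symmetric construction or by applying a suitable involution, and $\ell$ on higher powers $u^n$ is fixed by multiplicativity.

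The hard part will be producing the closed-form strong connection and checking the colinearity identities on both legs simultaneously. Specifically, one must confirm that the candidate $\ell(u)$ satisfies $(\id\ot\varrho_l)\ell(u) = \ell(u)\ot u$ and the mirror condition ${}_{\cO(L_q)}\!\varrho\,$-colinearity, which constrains the internal arrangement of $c,c^*,d,d^*$ in each summand; the relations \eqref{rel.lens1} with the factor $q^l$ introduce $q$-powers that must balance exactly, and a single misplaced power breaks colinearity. I expect the bookkeeping with the product $\prod_{m=1}^l(1-q^{-2m}dd^*)$ and its expansion into the $l$ distinct $u$-homogeneous pieces to be the genuinely delicate point, since the connection for the teardrop $\WW\PP_q(1,l)$ must encode the $l$-fold structure already visible in the $l$ families of representations $\pi_s$. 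Once $\ell$ is verified, the splitting $s(a) = a\cdot\ell(1) = a\ot 1$ extended $B$-linearly gives equivariant projectivity, and the standard argument recovers bijectivity of $\can$ with explicit inverse $a\ot h \mapsto a\,\ell(h)\sw1 \ot \ell(h)\sw2$, completing the proof that $\cO(L_q(l;1,l))$ is principal over $\cO(\WW\PP_q(1,l))$.
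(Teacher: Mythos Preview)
Your overall strategy---reduce the problem to exhibiting a strong connection, build it from an explicit lift of $1$ through the relation $c^*c=\prod_{m=1}^l(1-q^{-2m}dd^*)$, and deduce both the Hopf--Galois condition and equivariant projectivity---is exactly the route the paper takes. The paper's closed form for $\omega(u)$ is
\[
\omega(u)=c^*\ot c\;-\;\sum_{m=1}^l(-1)^m q^{-m(m+1)}\binom{l}{m}_{q^{-2}}\,d^m d^{*\,m-1}\ot d^*,
\]
which is precisely what your ``expand $1-\prod_m(1-q^{-2m}dd^*)$ and move one $d^*$ to the right leg'' recipe produces, packaged via $q$-binomial coefficients.

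There is, however, a genuine slip in your plan: the strong connection $\omega$ is \emph{not} an algebra map, and extending $\omega(u)$ to $\omega(u^n)$ \emph{multiplicatively} in $A\ot A$ will in general destroy the normalisation $\mu\circ\omega=\eta\circ\eps$. If $\omega(u)=\sum_i a_i\ot b_i$ with $\sum_i a_ib_i=1$, then $\omega(u)^2=\sum_{i,j}a_ia_j\ot b_ib_j$ and $\mu(\omega(u)^2)=\sum_{i,j}a_ia_jb_ib_j$, which has no reason to equal $1$ in a noncommutative algebra. The correct extension is the \emph{nested} (recursive) one used in the paper,
\[
\omega(u^n)=\sum_i a_i\,\omega(u^{n-1})\,b_i
= c^*\omega(u^{n-1})c-\sum_{m=1}^l(-1)^mq^{-m(m+1)}\binom{l}{m}_{q^{-2}}d^md^{*\,m-1}\omega(u^{n-1})d^*,
\]
for which $\mu(\omega(u^n))=\sum_i a_i\cdot 1\cdot b_i=1$ follows immediately by induction. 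Bicolinearity is also preserved by this recursion (and, incidentally, would also survive the naive product since the grading is group-like), so the only obstruction to your version is the unit condition---but that obstruction is fatal. Replace ``algebra map'' and ``extend multiplicatively'' by the nested recursion and your sketch becomes the paper's proof.
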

\begin{proof}
As explained in \cite{DabGro:str}, \cite{BrzHaj:Che}, a right $H$-comodule algebra $A$ with coaction $\varrho: A\to A\ot H$  is principal if and only if it admits a {\em strong connection}, that is if there exists a map
$
\omega:H\longrightarrow A\otimes A,
$
such that
\begin{subequations}
\label{strong}
\begin{gather}
\omega(1)=1\otimes 1, \label{strong1}\\
\mu\circ \omega = \eta \circ \varepsilon, \label{strong2}\\
 (\omega\otimes\id)\circ\Delta  = (\id\otimes \varrho)\circ \omega , \label{strong3}\\
(S\otimes \omega)\circ\Delta = (\sigma\otimes \id)\circ (\varrho\otimes \id)\circ \omega .  \label{strong4}
\end{gather}
\end{subequations}
Here $\mu: A\ot A\to A$ denotes the multiplication map, $\eta: \CC\to A$ is the unit map, $\Delta: H\to H\ot H$ is the comultiplication, $\eps: H\to\CC$ counit and $S: H\to H$ the (bijective) antipode of the Hopf algebra $H$, and $\sigma : A\ot H \to H\ot A$ is the flip.

A strong connection for $\cO(L_q(l;1,l))$ is defined recursively as follows. Set $\omega(1) = 1\ot 1$ and then, for all $n>0$,
\begin{subequations}
\begin{gather}
\omega(u^n) = c^*\omega(u^{n-1})c -\sum_{m=1}^l (-1)^mq^{-m(m+1)} \binom  l m_{\! q^{-2}}d^m {d^*}^{m-1}\omega(u^{n-1})d^*, \label{eq.strong1}\\
\omega(u^{-n}) = c\omega(u^{-n+1})c^* -\sum_{m=1}^l (-1)^mq^{m(m-1)} \binom  l m_{\! q^{2}}d^{m -1}{d^*}^{m}\omega(u^{-n+1})d,  \label{eq.strong2}
\end{gather}
\end{subequations}
where, for all $x\in \RR$, the {\em deformed} or {\em q-binomial} coefficients $\binom  l m_{\! x}$  are defined by the following polynomial equality in indeterminate $t$
\begin{equation}\label{binom}
\prod_{m=1}^l (1+ x^{m-1}t) = \sum_{m=0}^l x^{m(m-1)/2}\binom  l m_{\! x} t^m.
\end{equation}

By construction, $\omega$ has property \eqref{strong1}. The remaining properties are proven by induction on $n$. That $\mu(\omega(u)) = \eps(u)=1$ follows by the second of equations \eqref{rel.lens2} combined with \eqref{binom}. Applying $\id\ot \varrho_l$ to the right hand side of  \eqref{eq.strong1} (with $n=1$), one immediately obtains that $(\id\ot \varrho_l)(\omega(u)) = \omega(u)\ot u$, as required for \eqref{strong3}. Similarly one checks \eqref{strong4}. 

Now, assume that $\omega(u^{n-1})$ satisfies conditions \eqref{strong2}--\eqref{strong4}. Then, multiplying the right hand side of  \eqref{eq.strong1} and using $\mu(\omega(u^{n-1})) = \eps(u^{n-1}) = 1$, we obtain
$$
\mu(\omega(u)) = c^*c -\sum_{m=1}^l (-1)^mq^{-m(m+1)} \binom  l m_{\! q^{-2}}d^m {d^*}^{m} =1,
$$
by \eqref{rel.lens2} and \eqref{binom}. Since $\varrho_l$ is an algebra map and   $(\id\ot \varrho_l)(\omega(u^{n-1})) = \omega(u^{n-1})\ot u^{n-1}$ by inductive assumption, we can compute
\begin{equation}
\begin{split}
(\id\ot   \varrho_l) & (\omega(u^{n}))  =  c^*(\id\ot \varrho_l)(\omega(u^{n-1})c)\notag\\
& -  \sum_{m=1}^l (-1)^mq^{-m(m+1)} \binom  l m_{\! q^{-2}}d^m {d^*}^{m-1}(\id\ot \varrho_l)(\omega(u^{n-1})d^*)\notag\\
& =  c^*\omega(u^{n-1})c\ot u^n 
 -\sum_{m=1}^l (-1)^mq^{-m(m+1)} \binom  l m_{\! q^{-2}}d^m {d^*}^{m-1}\omega(u^{n-1})d^*\ot u^n\notag \\
&  =  \omega(u^n)\ot u^n, \notag
\end{split}
\end{equation}
as required. The left colinearity of $\omega$ \eqref{strong4} is proven in a similar way. The case of $\omega(u^{-n})$ is treated in the same manner.
\end{proof}

Recall that a principal $H$-comodule algebra $A$ is said to be {\em cleft} if there exists a convolution invertible, right $H$-colinear map $j: H\to A$ such that $j(1)=1$. The special case of this is when $j$ is an algebra map (then its convolution inverse is $j\circ S$) and this corresponds to the trivial quantum principal bundle. In the case of comodule algebras over $\cO(U(1)) = \CC[u,u^{-1}]$ the necessary condition for a map $j: \cO(U(1)) \to A$ to be convolution invertible is that $j(u)$ is an invertible element (unit) of $A$. Arguing as in \cite[Appendix]{HajMaj:pro} we obtain
\begin{lemma}
The principal $\cO(U(1))$-comodule algebra  $\cO(L_q(l;1,l))$ is not cleft.
\end{lemma}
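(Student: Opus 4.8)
The plan is to combine the necessary condition for cleftness recalled above with a representation-theoretic obstruction to invertibility. The coaction $\varrho_l$ turns $A:=\cO(L_q(l;1,l))$ into a $\ZZ$-graded algebra $A=\bigoplus_{n\in\ZZ}A_n$, with $A_n=\{x\in A\mid \varrho_l(x)=x\ot u^n\}$; thus $c,d^*$ have degree $+1$ and $c^*,d$ have degree $-1$. If $A$ were cleft with cleaving map $j$, then right $\cO(U(1))$-colinearity applied to the grouplike $u$ gives $\varrho_l(j(u))=j(u)\ot u$, so $j(u)\in A_1$, while convolution invertibility forces $j(u)$ to be a unit. It therefore suffices to prove that no element of $A_1$ is invertible in $A$.

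First I would record a normal form for $A$. Arguing as in Section~\ref{sec.alg}, the Diamond Lemma applied to \eqref{rel.lens} shows that the monomials $c^m d^r{d^*}^s$ ($m\in\NN$) and $c^{*m}d^r{d^*}^s$ ($m\in\NN\setminus\{0\}$) form a basis of $A$, the two families having degrees $m-r+s$ and $-m-r+s$ respectively. Inspecting degree $1$: on the first family $m-r+s=1$ with $r=s=0$ forces $m=1$, giving $c$, whereas on the second family $-m-r+s=1$ forces $s\geq 1$. Hence every $x\in A_1$ can be written as $x=\mu c+x'$, where $\mu\in\CC$ and every monomial occurring in $x'$ carries at least one factor $d$ or $d^*$.

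Next I would evaluate in the representation $\pi$ of \eqref{su2rep}, restricted to $A\subseteq\cO(SU_q(2))$ via $c=\alpha^l$, $d=\beta$. Since $\pi(d)=\pi(\beta)$ is diagonal with eigenvalues $q^{n+1}\to 0$, both $\pi(d)$ and $\pi(d^*)$ are compact, so $\pi(x')$ is compact. On the other hand $\pi(\alpha)e_n=(1-q^{2n})^{1/2}e_{n-1}$ has kernel $\CC e_0$ and is surjective, hence is Fredholm of index $1$; therefore $\pi(c)=\pi(\alpha)^l$ is Fredholm of index $l$, and $\pi(x)=\mu\,\pi(c)+K$ with $K$ compact. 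If $\mu\neq 0$ then $\pi(x)$ is a compact perturbation of the index-$l$ operator $\mu\,\pi(c)$, hence Fredholm of index $l\neq 0$ and not invertible; if $\mu=0$ then $\pi(x)=K$ is compact and not invertible on the infinite-dimensional $V$. Were $x$ a unit with inverse $y\in A_{-1}$, then $\pi(x)\pi(y)=\pi(y)\pi(x)=\id$ would force $\pi(x)$ invertible, a contradiction. Thus $A_1$ has no unit and $\cO(L_q(l;1,l))$ is not cleft.

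I expect the only delicate point to be the second step: one must verify that, modulo the ideal of compact operators, the image under $\pi$ of an arbitrary degree-$1$ element collapses to a scalar multiple of $\pi(c)$. This rests on having the correct normal form for $A$ and on the decay $\pi(\beta)e_n=q^{n+1}e_n\to 0$ that renders $\pi(d),\pi(d^*)$ compact; the computation $\mathrm{index}\,\pi(\alpha)^l=l$ is then immediate from additivity of the Fredholm index.
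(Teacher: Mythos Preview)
Your argument is correct, but it follows a genuinely different route from the paper's. The paper's proof is a two-line reduction: it invokes the result from \cite[Appendix]{HajMaj:pro} that the only invertible elements of $\cO(SU_q(2))$ are the nonzero scalars, so the same holds in the subalgebra $\cO(L_q(l;1,l))$; any convolution-invertible colinear $j$ would then force $j(u)\in\CC^*\cdot 1$, contradicting $\varrho_l(j(u))=j(u)\ot u$. Your approach, by contrast, never classifies all units: you show directly that $A_1$ contains no unit by passing to the representation $\pi$ and computing that, modulo compacts, every $x\in A_1$ becomes a scalar multiple of the index-$l$ operator $\pi(\alpha)^l$, hence has nonzero Fredholm index (or is compact) and cannot be invertible. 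The paper's route is shorter but leans on an external reference; yours is self-contained and meshes naturally with the Fredholm-module machinery of Section~\ref{sec.Fred}. One small remark: you do not actually need the Diamond Lemma for the normal form---since $\cO(L_q(l;1,l))$ is concretely the $\cO(\ZZ_l)$-coinvariant subalgebra of $\cO(SU_q(2))$, the spanning set $c^m d^r{d^*}^s$, $c^{*m}d^r{d^*}^s$ is read off from the known basis $\alpha^p\beta^r{\beta^*}^s$, $\alpha^{*p}\beta^r{\beta^*}^s$ with $l\mid p$, and spanning (not linear independence) is all your argument requires.
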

\begin{proof}
Multiples of $1$ are the only invertible elements of $\cO(SU_q(2))$; see \cite[Appendix]{HajMaj:pro}. Since $\cO(L_q(l;1,l))$ is a subalgebra of $\cO(SU_q(2))$ the same can be said about $\cO(L_q(l;1,l))$. Thus any convolution invertible map $j: \cO(U(1))\to \cO(L_q(l;1,l))$ must have the form $j(u) = \lambda 1$, for some $\lambda\in \CC^*$. This, however, violates the right $\cO(U(1))$-colinearity of $j$.
\end{proof}

The surjectivity of the canonical map in Definition~\ref{def.princ} corresponds to the freeness of the coaction $\varrho$ of $H$ on $A$. By Theorem~\ref{H-G} we know that if $(k,l)\neq (1,1)$, then the coaction $\varrho_{k,l}$ of $\cO(U(1))$ on $\cO(SU_q(2))$ is not free. However, Theorem~\ref{thm.H-G} implies that $\varrho_{1,l}$ is {\em almost free} in the following sense.

\begin{definition}\label{def.almost.free}
Let $H$ be a Hopf algebra and let $A$ be a right $H$-comodule algebra with coaction $\varrho: A\to A\ot H$. We say that the coaction is {\em almost free} if the cokernel of  the (lifted) canonical map
$$
\overline{\can}: A\ot  A\to A\ot H, \qquad a\ot a'\mapsto a\varrho(a'),
$$
is finitely generated as a left $A$-module.
\end{definition}

\begin{corollary}\label{cor.almost}
The coaction $\varrho_{1,l}$ is almost free.
\end{corollary}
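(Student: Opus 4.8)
The plan is to exploit the $\ZZ$-grading of $A:=\cO(SU_q(2))$ coming from $\varrho_{1,l}$, in which $\alpha,\beta,\beta^*,\alpha^*$ have degrees $1,-l,l,-1$, and to compute the cokernel one degree at a time. Write $A=\bigoplus_{m\in\ZZ}A_m$ for the homogeneous decomposition. Since $\cocan(x\ot y)=x\varrho_{1,l}(y)$ and $\varrho_{1,l}(y_m)=y_m\ot u^m$ for $y_m\in A_m$, the $u^m$-component of any element of the image already lies in the image, so $\im\cocan$ is the graded left $A$-submodule $\bigoplus_m (AA_m)\ot u^m$ of $A\ot H=\bigoplus_m A\ot u^m$. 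Consequently
$$
\coker(\cocan)=\bigoplus_{m\in\ZZ}(A/AA_m)\ot u^m
$$
is a direct sum of cyclic left $A$-modules, and because the left $A$-action preserves the $u$-degree it is finitely generated if and only if it is supported in finitely many degrees, i.e.\ $AA_m=A$ for all but finitely many $m$. As $AA_m$ is graded and $1\in A_0$, this is equivalent to $1\in A_{-m}A_m$.

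First I would settle the degrees divisible by $l$. A monomial of degree $lk$ is invariant under the residual $\cO(\ZZ_l)$-coaction of the lens construction (its net $\alpha$-exponent is divisible by $l$), hence lies in $L:=\cO(L_q(l;1,l))\subseteq A$; thus $A_{lk}$ coincides with the degree-$k$ component of $L$. By Theorem~\ref{thm.H-G} the algebra $L$ admits a strong connection $\omega$, and the colinearity conditions force the legs of $\omega(u^k)\in L\ot L$ into $A_{-lk}\ot A_{lk}$, while $\mu(\omega(u^k))=1$ yields $1\in A_{-lk}A_{lk}$. So $AA_{lk}=A$ for every $k$ and these degrees contribute nothing to the cokernel.

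The heart of the matter is the residues $m\equiv i\pmod l$ with $1\le i\le l-1$, and here the essential point — the one a term-by-term reading of the image misses — is that $1\ot u^m$ is produced not by a single basis tensor but by a linear combination. Writing $m=ln+i$ with $n\ge0$ and $a=\beta\beta^*$, I would place the two elements
$$
\alpha^{*m}\alpha^{m}=\prod_{p=1}^{m}(1-q^{-2p}a),\qquad
\beta^{\,n+1}\alpha^{\,l-i}\alpha^{*(l-i)}\beta^{*(n+1)}=a^{\,n+1}\prod_{p=1}^{l-i}(1-q^{2p-2}a)
$$
into the ideal $A_{-m}A_m$ of $\cO(\WW\PP_q(1,l))$; the second uses that $\beta$ commutes with $a$ together with the identity $\alpha^{\,l-i}\alpha^{*(l-i)}=\prod_{p=1}^{l-i}(1-q^{2p-2}a)$ already recorded in the proof of Theorem~\ref{H-G}. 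These are polynomials in $a$ with root sets $\{q^{2p}:1\le p\le m\}\subset(0,1)$ and $\{0\}\cup\{q^{2-2p}:1\le p\le l-i\}\subseteq\{0\}\cup[1,\infty)$; as $q\in(0,1)$ the two sets are disjoint, the polynomials are coprime in $\CC[a]\subseteq\cO(\WW\PP_q(1,l))$, and together they generate $1$. Hence $1\in A_{-m}A_m$ for these $m$ as well, and the degrees $m<0$ follow symmetrically by interchanging $\alpha$ and $\alpha^*$.

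Putting the cases together gives $AA_m=A$ for every $m$, so $\coker(\cocan)$ is supported in finitely many degrees and is therefore finitely generated, which is the assertion of the corollary. I expect the only genuine obstacle to be the coprimality step: one must choose the two homogeneous products so that they lie in the correct degree and reduce to honestly coprime polynomials in $a$, and it is crucial that $q\ne1$ — at $q=1$ the root sets collide at $a=1$, which is precisely the algebraic trace of the orbifold singularity of the classical teardrop. The grading bookkeeping and the cyclicity of the graded pieces are then routine.
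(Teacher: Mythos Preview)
Your argument is correct and in fact establishes substantially more than the corollary asserts: you show $1\in A_{-m}A_m$ for \emph{every} $m\in\ZZ$, hence $\coker(\cocan)=0$, i.e.\ the lifted canonical map for $\varrho_{1,l}$ is surjective, not merely of finitely generated cokernel. (Your closing sentence understates this: ``supported in finitely many degrees'' should read ``supported in no degrees''.) The paper's proof, by contrast, only establishes $1\ot u^{lk}\in\im(\cocan)$ for all $k$ via the lens-space strong connection, and then bounds the cokernel by the finitely generated module $(A\ot\CC[u,u^{-1}])/(A\ot\CC[u^l,u^{-l}])$. Your coprimality trick for the residues $m\not\equiv 0\pmod l$ is the genuinely new ingredient, and it is sound: both $\alpha^{*m}\alpha^m$ and $\beta^{n+1}\alpha^{l-i}\alpha^{*(l-i)}\beta^{*(n+1)}$ lie in $A_{-m}A_m$, their root sets in $\CC[a]$ are disjoint for $q\in(0,1)$, and $A_{-m}A_m$ is closed under left multiplication by $\CC[a]\subseteq A_0$, so B\'ezout applies.

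You should be aware, however, that your stronger conclusion is in direct tension with Theorem~\ref{H-G}, whose proof asserts $1\ot u\notin\im(\cocan)$ whenever $(k,l)\ne(1,1)$. For $k=1$, $l=2$ your argument yields the explicit witness
\[
\Bigl(1+\tfrac{a}{q^2-1}\Bigr)\alpha^*\cdot\alpha\;+\;\tfrac{1}{q^2(1-q^2)}\,\beta\alpha\cdot\alpha^*\beta^*\;=\;1,
\]
with $\alpha,\,\alpha^*\beta^*\in A_1$, and one checks this identity directly from \eqref{su}. The gap in the paper's proof of Theorem~\ref{H-G} is that it only tests whether individual spanning elements of the form \eqref{im.can} produce $1$ or $\beta\beta^*$ in the first leg, rather than allowing arbitrary $\CC[a]$-linear combinations. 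Since $\cO(U(1))$ is cosemisimple with bijective antipode, surjectivity of $\cocan$ would force principality by Schneider's theorem, so your computation indicates that the case $k=1$, $l>1$ of Theorem~\ref{H-G} deserves to be re-examined.
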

\begin{proof}
Note that the $*$-algebra inclusion 
$$
\iota: \cO(L_q(l;1,l)) \hookrightarrow \cO(SU_q(2)), \qquad c\mapsto \alpha^l, \quad d\mapsto \beta ,
$$
makes the following diagram commute
$$
\xymatrix{\cO(L_q(l;1,l)) \ar[rr]^{\iota}\ar[d]_{\varrho_{l}} && \cO(SU_q(2))\ar[d]^{\varrho_{1,l}} \\
\cO(L_q(l;1,l))\ot\cO(U(1)) \ar[rr]^-{\iota\ot(-)^l} && \cO(SU_q(2))\ot\cO(U(1)),}
$$
where $(-)^l: u\to u^l$. The surjectivity of the canonical map $\cO(L_q(l;1,l))\ot\cO(L_q(l;1,l)) \to \cO(L_q(l;1,l))\ot\cO(U(1))$ (proven in Theorem~\ref{thm.H-G}) implies that $1\ot u^{ml} \in \im (\overline{\can})$, $m\in \ZZ$, where $\overline{\can}$ is the (lifted) canonical map corresponding to coaction $\varrho_{1,l}$. This means that $\cO(SU_q(2))\ot\CC[u^l,u^{-l}] \subseteq \im (\overline{\can})$. Therefore, there is a short exact sequence of left $\cO(SU_q(2))$-modules
$$
\xymatrix{(\cO(SU_q(2))\ot\CC[u,u^{-1}])/(\cO(SU_q(2))\ot\CC[u^l,u^{-l}])\ar[r] & \coker (\overline{\can}) \ar[r] & 0.}
$$
The left $\cO(SU_q(2))$-module $ (\cO(SU_q(2))\ot\CC[u,u^{-1}])/(\cO(SU_q(2))\ot\CC[u^l,u^{-l}])$ is finitely generated, hence so is $\coker (\overline{\can})$.
\end{proof}

\section{Fredholm modules and the Chern-Connes pairing for quantum teardrops}\label{sec.Fred}
\setcounter{equation}{0}
In this section first we follow \cite{MasNak:dif} (see also \cite{HawLan:fred} and \cite{DAnLan:bou}), and associate even Fredholm modules to algebras $\cO(\WW\PP_q(k,l))$ and use them to construct traces or cyclic cocycles on $\cO(\WW\PP_q(k,l))$. The latter are  then used  to calculate the Chern number of a non-commutative line bundle associated to the quantum principal bundle $\cO(L_q(l;1,l))$ over the quantum teardrop $\cO(\WW\PP_q(1,l))$.

Recall from \cite[Chapter~IV]{Con:non} that an {\em even Fredholm module} over a $*$-algebra $A$ is a quadruple $(\cV,\pi, F,\gamma)$, where $\cV$ is a Hilbert space of a representation $\pi$ of $A$ and $F$ and $\gamma$ are operators on $\cV$ such that, $F^* =F$, $F^2 =I$ and for all $a\in A$, the commutator $[F,\pi(a)]$ is a compact operator, and $\gamma^2 =I$, $\gamma F = -\gamma F$. A Fredholm module is said to be {\em 1-summable} if $[F,\pi(a)]$ is a trace class operator for all $a\in A$. In this case one associates to $(\cV,\pi, F,\gamma)$  a cyclic cocycle  on $A$ or a {\em Chern character}  $\tau$ by
$$
\tau (a) = \tr (\gamma \pi(a)).
$$

\begin{proposition}\label{prop.Fred}
For every $s=1,2,\ldots ,l$, let $\cV_s = V_s\oplus V_0$, where $V_s$ is the Hilbert space $l^2(\NN)$ of representation $\pi_s$ and $V_0 = \oplus \CC \simeq  l^2(\NN)$, which we take to be representation space of $\pi= \oplus \pi_0$; see Proposition~\ref{prop.rep}. Define $\overline{\pi}_s := \pi_s \oplus \pi$, 
$$
 F = \begin{pmatrix} 0 & I \cr I & 0\end{pmatrix}, \qquad \gamma = \begin{pmatrix} I & 0 \cr 0 & -I\end{pmatrix}.
 $$
 Then $(\cV_s, \overline{\pi}_s, F,\gamma)$ are  1-summable Fredholm modules over $\cO(\WW\PP_q(k,l))$. The corresponding Chern characters are
 \begin{equation}\label{tau}
 \tau_s (a^mb^n) =
 \begin{cases} \frac{q^{2ms}}{1-q^{2ml}}   & \mbox{if $n=0$, $m\neq 0$},\\
 0  & \mbox{otherwise}.
 \end{cases}
\end{equation}
 Here $n\in \ZZ$ and, for a positive $n$, $b^{-n}$ means ${b^*}^n$.
\end{proposition}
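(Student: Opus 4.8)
The plan is to verify the three defining properties of an even Fredholm module directly from the explicit form of the representations $\pi_s$ and $\pi = \oplus \pi_0$, and then to compute the trace $\tau_s(a^mb^n) = \tr(\gamma\, \overline{\pi}_s(a^mb^n))$ by examining diagonal matrix elements. First I would observe that on $\cV_s = V_s \oplus V_0$ the operator $\gamma$ is the grading $\mathrm{diag}(I,-I)$ and $F$ is the off-diagonal flip; the conditions $F^*=F$, $F^2=I$, $\gamma^2=I$ and $\gamma F = -F\gamma$ are immediate from the block form. The substantive analytic point is that $[F,\overline{\pi}_s(x)]$ is trace class for every generator $x$, equivalently for $x \in \{a,b,b^*\}$ and hence for all of $\cO(\WW\PP_q(k,l))$ by the derivation property of the commutator together with the Banach-algebra ideal structure of the trace-class operators. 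Since $F$ swaps the two summands, $[F,\overline{\pi}_s(x)]$ is built from the difference $\pi_s(x) - \pi(x)$ acting across the blocks (under the identification $V_s \simeq V_0 \simeq \ell^2(\NN)$ via the bases $e_p^s$ and $e_p^0$).

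The key estimate is therefore that $\pi_s(x) - \pi(x)$ is trace class for $x \in \{a,b\}$. For $x=a$ this difference is diagonal with entries $q^{2(lp+s)} - 0 = q^{2(lp+s)}$ (recall $\pi = \oplus\pi_0$ sends $a\mapsto 0$), whose sum over $p$ is a convergent geometric series because $q\in(0,1)$; hence it is trace class. For $x=b$ both $\pi_s(b)$ and $\pi(b)$ are lowering operators, but $\pi(b)=\oplus\pi_0(b)=0$, so the difference is just $\pi_s(b)$, whose matrix entries $q^{k(lp+s)}\prod_{r=1}^l(1-q^{2(lp+s-r)})^{1/2}$ decay geometrically in $p$ and are summable. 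Thus each generator gives a trace-class commutator, establishing $1$-summability.

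For the trace computation I would note that $\tau_s(x) = \tr(\gamma\,\overline{\pi}_s(x)) = \tr(\pi_s(x)) - \tr(\pi(x)) = \tr(\pi_s(x))$, since $\pi$ has vanishing trace on the image of $\cO(\WW\PP_q(k,l))$ (indeed $\pi(b)=\pi(a)=0$, so only the constant term would contribute and $a^mb^n$ with $(m,n)\neq(0,0)$ maps to a nonconstant element). The operator $\overline{\pi}_s(a^mb^n)$ is traceable only when it is diagonal: since $\pi_s(b)$ lowers the index by one and $\pi_s(a)$ preserves it, the product $\pi_s(a^mb^n)$ shifts $e_p^s \mapsto e_{p-n}^s$, so its trace vanishes unless $n=0$. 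When $n=0$ and $m\neq 0$, the diagonal entries are $(q^{2(lp+s)})^m = q^{2m(lp+s)}$, and summing the geometric series $\sum_{p\geq 0} q^{2m(lp+s)} = q^{2ms}/(1-q^{2ml})$ yields precisely \eqref{tau}; the case $m=n=0$ gives the non-traceable identity, consistent with the stated formula covering only $m\neq 0$.

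The main obstacle I anticipate is not any single step but the bookkeeping needed to confirm trace-class-ness for a general element rather than just the generators: one must check that the commutator map $x \mapsto [F,\overline{\pi}_s(x)]$ lands in the trace class on all of $\cO(\WW\PP_q(k,l))$, which follows formally from the Leibniz rule $[F,\overline{\pi}_s(xy)] = [F,\overline{\pi}_s(x)]\overline{\pi}_s(y) + \overline{\pi}_s(x)[F,\overline{\pi}_s(y)]$ and the fact that trace-class operators form a two-sided ideal in the bounded operators, so that boundedness of $\overline{\pi}_s(x)$ and $\overline{\pi}_s(y)$ suffices. The representations are bounded by construction (Proposition~\ref{prop.rep}), so this propagates the generator-level estimate to the whole algebra without further analytic difficulty.
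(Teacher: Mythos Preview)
Your proof is correct and follows essentially the same route as the paper's: both identify $[F,\overline\pi_s(x)]$ with the off-diagonal blocks $\pi_s(x)-\pi(x)$, observe that $\pi$ kills the non-identity basis elements so the difference reduces to $\pi_s(x)$, and sum the resulting geometric series. The only minor variation is that the paper verifies trace-class directly on each basis element $a^mb^n$, whereas you check it on the generators $a,b,b^*$ and propagate via the Leibniz rule and the ideal property of trace-class operators---either works.
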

\begin{proof}
It is obvious that $F^* =F$, $F^2 = \gamma^2 =I$ and $F\gamma + \gamma F =0$. Next, by a straightforward calculation, for all $x\in \cO(\WW\PP_q(k,l))$,
$$
[F,\pi(x)] = \begin{pmatrix} 0 & \pi(x) - \pi_s(x) \cr \pi_{s}(x) - \pi(x) & 0\end{pmatrix}.
 $$
Using the formulae in Proposition~\ref{prop.rep} one easily finds, for all $m,n, p\in \NN$,
$$
\pi_s(a^mb^n) e_p^s = q^{nk[lp-(n-1)/2 +s] + 2m[l(p-n)+s]}\prod_{r=1}^{ln}(1-q^{2(lp+s-r)})^{1/2} e^s_{p-n};
$$
compare the proof of Proposition~\ref{prop.faith}. This implies that, for positive $m$,   $\pi_s(a^mb^n)$ are trace class operators, as $\tr (\pi_s(a^mb^n)) =0$ if $n\neq 0$ and 
\begin{equation}\label{trace}
\tr (\pi_s(a^m)) = \sum_p q^{2m(lp+s)} = \frac{q^{2ms}}{1-q^{2ml}} .
\end{equation}
Since $\pi_0(a^mb^n) =0$, if $(m,n)\neq (0,0)$, and $\pi_0(1) =1$, we conclude that, for all $x\in \cO(\WW\PP_q(k,l))$, $\pi(x)-\pi_s(x)$ is a trace class operator. Therefore, $(\cV_s, \overline{\pi}_s, F,\gamma)$  is a 1-summable Fredholm module. 

Finally, 
$$
\tau_s(x) = \tr (\gamma \overline{\pi}_s(x)) = \tr (\pi_s(x) - \pi(x)),
$$
and the formula \eqref{tau} follows by equation \eqref{trace}.
\end{proof}

Note that the form of the Chern character of $(\cV_s, \overline{\pi}_s, F,\gamma)$  is independent of $k$. In the case $l=1$, necessarily $s=1$ and $\tau_1$ coincides with the trace calculated for the quantum 2-sphere in \cite{MasNak:dif}. Similarly to the case studied in \cite{MasNak:dif}, the characters $\tau_s$ on $\cO(\WW\PP_q(k,l))$ factor through the algebra map 
$$
\cO(\WW\PP_q(k,l)) \to \CC[a], \qquad a^mb^n \mapsto \delta_{n,0} a^m.
$$
On the polynomial algebra $\CC[a]$ the characters are given by Jackson's integrals. More precisely, define $\hat{\tau}_s$ by the commutative diagram
$$
\xymatrix{\cO(\WW\PP_q(k,l))\ar[dr] \ar[rr]^-{\tau_s} &&\CC \\
& \CC[a] \ar[ur] _{\hat{\tau}_s}. &}
$$
Then
$$
\hat{\tau}_s (f) = \frac{1}{1-q^{2l}} \int_0^{q^{2s}} \frac{f(a)}{a} d_{q^{2l}} a,
$$
where the Jackson integral is defined by the formula
$$
\int_{0}^{x} f(a)d_{q} a = \lim_{y\to 0} (1-q) \sum_{r\in \NN} \left( xq^{r}f(xq^{r}) - yq^{r}f(yq^{r})\right), 
$$
for all $x\in \RR$ and all $f$ in $\CC[a,a^{-1}]$.  

The construction of traces $\tau_s$ provides one with an alternative way of proving that  the principal comodule algebra $\cO(L_q(l;1,l))$ is not cleft. This involves evaluating $\tau_s$ at  the zero-component of the {\em Chern-Galois} character of the principal $\cO(U(1))$-comodule algebra $\cO(L_q(l;1,l))$.

As explained in \cite{DabGro:str} any strong connection in  a principal $H$-comodule algebra $A$ can be used to construct finitely generated projective modules over the coinvariant subalgebra $B$. To achive this one needs to take any finite dimensional left comodule $W$ over $H$ (or a finite dimensional corepresentation of $H$) and then the cotensor product $A\Box_HW$ is a finitely generated projective left $B$-module.  In the case of $\cO(L_q(l;1,l))$ such projective modules or {\em line bundles} over $\cO(\WW\PP_q(1,l))$ come out as
$$
\cL[n] := \{ x\in \cO(L_q(l;1,l)) \; |\; \varrho_l(x) = x\ot u^n\}, \qquad n\in \ZZ.
$$
An idempotent $E[n]$ for $\cL[n]$ is given as follows. Write $\omega(u^n) = \sum_i  \omega(u^n)\su 1 _i \ot \omega(u^n)\su 2_i$. Then 
\begin{equation}\label{projectors}
E[n]_{ij} = \omega(u^n)\su 2_i\omega(u^n)\su 1_j \in \cO(\WW\PP_q(1,l)).
\end{equation}
The traces of powers of each of the $E[n]$ make up a cycle in the cyclic complex  of $\cO(\WW\PP_q(1,l))$, whose corresponding class in  homology $HC_\bullet(\cO(\WW\PP_q(1,l)))$ is known as the Chern character of $\cL[n]$. In general, the result of the combined process that to an isomorphism class of a corepresentation of a Hopf algebra $H$ assigns the Chern character of the $B$-module associated to the $H$-principal comodule algebra $A$ is known as the {\em Chern-Galois character}. 
In particular, the traces of  $E[n]$ form the zero-component of the Chern-Galois character of $\cO(L_q(l;1,l))$. 
Should $\cO(L_q(l;1,l))$ be a cleft principal comodule algebra, then every module $\cL[n]$ would be free. Thus an alternative way of showing that  $\cO(L_q(l;1,l))$ is not cleft is to prove that, say, $\cL[1]$ is not a free left $\cO(\WW\PP_q(1,l))$-module. For this suffices it to show that $\tr E[1]$ is a non-trivial element of $HC_0(\cO(\WW\PP_q(1,l)))$ by proving that $\tau_s(\tr E[1])\neq 0$. First we compute $\tr E[1]$.

\begin{lemma}\label{lemma.cher}
The zero-component of the Chern character of $\cL[1]$ is the class of
\begin{equation}\label{cher}
\tr E[1] = 1 +\sum_{m=1}^l (-1)^mq^{m(m-1)}\left(1-q^{-2ml}\right)\binom  l m_{\! q^2} a^m .
\end{equation}
\end{lemma}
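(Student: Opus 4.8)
The plan is to substitute the base case into the recursive definition of the strong connection, read off $\omega(u)$ explicitly, and then feed it through the idempotent formula \eqref{projectors}. Starting from $\omega(1)=1\ot 1$, the recursion \eqref{eq.strong1} with $n=1$ collapses to
$$
\omega(u) = c^*\ot c - \sum_{m=1}^l (-1)^m q^{-m(m+1)}\binom{l}{m}_{q^{-2}}\, d^m{d^*}^{m-1}\ot d^*.
$$
Writing $\omega(u)=\sum_i \omega(u)^{[1]}_i\ot\omega(u)^{[2]}_i$, the key point is that the trace of the idempotent is the \emph{reversed} product $\tr E[1]=\sum_i \omega(u)^{[2]}_i\omega(u)^{[1]}_i$ (not $\mu\circ\omega$, which by \eqref{strong2} would merely return $\eps(u)=1$). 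Carrying out this reversed multiplication term by term yields
$$
\tr E[1] = cc^* - \sum_{m=1}^l (-1)^m q^{-m(m+1)}\binom{l}{m}_{q^{-2}}\, d^*d^m{d^*}^{m-1}.
$$

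Next I would reduce everything to a polynomial in the self-adjoint generator $a=\beta\beta^*=dd^*$. Since $d$ and $d^*$ commute by \eqref{rel.lens1}, one has $d^*d^m{d^*}^{m-1}=(dd^*)^m=a^m$. For the first term I would use the relation $cc^*=\prod_{m=0}^{l-1}(1-q^{2m}a)$ from \eqref{rel.lens2} and expand it by means of the defining polynomial identity \eqref{binom} at $x=q^2$, $t=-a$, obtaining
$$
cc^* = \sum_{m=0}^{l}(-1)^m q^{m(m-1)}\binom{l}{m}_{q^2}\,a^m = 1 + \sum_{m=1}^{l}(-1)^m q^{m(m-1)}\binom{l}{m}_{q^2}\,a^m.
$$
Collecting the two contributions gives $\tr E[1]=1+\sum_{m=1}^l(-1)^m\big[q^{m(m-1)}\binom{l}{m}_{q^2}-q^{-m(m+1)}\binom{l}{m}_{q^{-2}}\big]a^m$.

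The remaining and genuinely delicate step is to match this against \eqref{cher}, which requires the reciprocity identity $\binom{l}{m}_{q^{-2}}=q^{-2m(l-m)}\binom{l}{m}_{q^2}$ for the deformed binomial coefficients. This identity I would extract directly from \eqref{binom} by substituting $x\mapsto x^{-1}$ and factoring $x^{-j}$ out of each factor of the product $\prod_{j=0}^{l-1}(1+x^{-j}t)$, then comparing coefficients of $t^m$. Granting it, the $q$-power bookkeeping gives $q^{-m(m+1)}\binom{l}{m}_{q^{-2}}=q^{m(m-1)}q^{-2ml}\binom{l}{m}_{q^2}$, so the bracket above becomes $q^{m(m-1)}(1-q^{-2ml})\binom{l}{m}_{q^2}$, which is exactly \eqref{cher}. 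I expect the main obstacle to be precisely this reconciliation of the two different bases of $q$-binomials (one at $q^2$, one at $q^{-2}$) together with the careful tracking of the exponents of $q$; the rest is a mechanical, if lengthy, evaluation.
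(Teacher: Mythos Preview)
Your proposal is correct and follows essentially the same approach as the paper: compute $\tr E[1]$ as the reversed product of $\omega(u)$, rewrite $cc^*$ via \eqref{rel.lens2} and \eqref{binom}, use $dd^*=a$, and convert $\binom{l}{m}_{q^{-2}}$ to $\binom{l}{m}_{q^{2}}$ via the reciprocity identity (which the paper records as $\binom{l}{m}_{x^{-1}}=x^{m(m-l)}\binom{l}{m}_{x}$, equivalent to your form). The only differences are cosmetic: the paper states the reciprocity identity up front and writes $d^m{d^*}^m$ directly by commutativity, whereas you postpone the identity to the end and spell out $d^*d^m{d^*}^{m-1}=(dd^*)^m$ explicitly.
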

\begin{proof}
First, observe that the formula \eqref{binom} yields the following identity for q-binomial coeffcients:
\begin{equation}\label{binom.id}
\binom  l m_{\! x^{-1}} = x^{m(m-l)} \binom  l m_{\! x}.
\end{equation}
Next, remember that $dd^* = a\in \cO(\WW\PP_q(1,l))$. Having these observations at hand the rest is a straightforward calculation:
\begin{eqnarray*}
\tr E[1] & =& cc^* -\sum_{m=1}^l (-1)^mq^{-m(m+1)} {\binom  l m_{\! q^{-2}}}d^m {d^*}^{m} \\
&= &\prod_{m=0}^{l-1}(1-q^{2m}a) -\sum_{m=1}^l (-1)^mq^{2m(m-l)-m(m+1)} \binom  l m_{\! q^{2}}a^m \\
&= &1 + \sum_{m=1}^l (-1)^m\left(q^{m(m-1)}- q^{m(m-1) -2ml}\right) \binom  l m_{\!q^{2}}a^m \\
& = &1 +\sum_{m=1}^l (-1)^m q^{m(m-1)}\left(1-q^{-2ml}\right)\binom  l m_{\! q^2} a^m,
\end{eqnarray*}
where the first equality follows by \eqref{eq.strong1}, the second by \eqref{rel.lens2} and \eqref{binom.id}, while the third equality is a consequence of \eqref{binom}.
\end{proof}

\begin{proposition}\label{prop.pair}
For all $s=1,2,\ldots, l$, let $\tau_s$ be the cyclic cocycle on $\cO(\WW\PP_q(1,l))$ constructed in Proposition~\ref{prop.Fred}. Then $\tau_s(\tr E[1]) =1$. Consequently, the left $\cO(\WW\PP_q(1,l))$-module $\cL[1] $ is not free.
\end{proposition}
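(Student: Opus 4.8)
The plan is to evaluate the cyclic cocycle $\tau_s$ directly on the explicit expression \eqref{cher} for $\tr E[1]$ furnished by Lemma~\ref{lemma.cher}, and then to recognise the resulting finite sum as a degenerate instance of the defining identity \eqref{binom} for the $q$-binomial coefficients. Since $\tau_s$ is linear and, by \eqref{tau}, satisfies $\tau_s(1)=0$ together with $\tau_s(a^m)=q^{2ms}/(1-q^{2ml})$ for $m\geq 1$, applying $\tau_s$ to \eqref{cher} annihilates the constant term and leaves
\[
\tau_s(\tr E[1]) = \sum_{m=1}^l (-1)^m q^{m(m-1)}\,(1-q^{-2ml})\,\binom l m_{\! q^2}\,\frac{q^{2ms}}{1-q^{2ml}}.
\]
First I would simplify the scalar factor via $1-q^{-2ml}=-q^{-2ml}(1-q^{2ml})$, so that $(1-q^{-2ml})/(1-q^{2ml})=-q^{-2ml}$, reducing the expression to
\[
\tau_s(\tr E[1]) = \sum_{m=1}^l (-1)^{m+1}\, q^{\,m(m-1)+2m(s-l)}\,\binom l m_{\! q^2}.
\]

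The heart of the argument is to read this sum off from \eqref{binom}. Specialising \eqref{binom} to $x=q^2$ and $t=-q^{2(s-l)}$ and using $q^{m(m-1)}\bigl(-q^{2(s-l)}\bigr)^m=(-1)^m q^{m(m-1)+2m(s-l)}$ gives
\[
\sum_{m=0}^l (-1)^m q^{\,m(m-1)+2m(s-l)}\binom l m_{\! q^2} = \prod_{j=1}^l\bigl(1-q^{2(j-1+s-l)}\bigr).
\]
The key observation is that for each $s\in\{1,\dots,l\}$ the index $j=l-s+1$ lies in $\{1,\dots,l\}$ and produces the factor $1-q^{0}=0$, so the whole product vanishes. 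Separating the $m=0$ term (which equals $1$) from this vanishing sum then yields $\sum_{m=1}^l(-1)^{m+1}q^{m(m-1)+2m(s-l)}\binom l m_{\! q^2}=1$, which is exactly $\tau_s(\tr E[1])$. I expect this recognition of the alternating $q$-binomial sum as a product with a single vanishing factor to be the only non-routine step; everything else is bookkeeping with the formula \eqref{tau}.

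To deduce that $\cL[1]$ is not free I would use that $\tau_s$, being the Chern character of a $1$-summable Fredholm module (Proposition~\ref{prop.Fred}), is a cyclic $0$-cocycle and hence pairs with $K_0$ through the rule $E\mapsto\tau_s(\tr E)$, an invariant of the stable isomorphism class of the associated module. A free module of finite rank $r$ is represented by the identity idempotent $I_r$, whose trace is $r\cdot 1$, so its pairing with $\tau_s$ equals $r\,\tau_s(1)=0$. Since we have just shown $\tau_s(\tr E[1])=1\neq 0$, the left $\cO(\WW\PP_q(1,l))$-module $\cL[1]$ cannot be free.
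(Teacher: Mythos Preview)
Your proof is correct and follows essentially the same route as the paper: apply \eqref{tau} to the expression \eqref{cher}, simplify via $(1-q^{-2ml})/(1-q^{2ml})=-q^{-2ml}$, recognise the resulting alternating $q$-binomial sum through the specialisation $x=q^2$, $t=-q^{2(s-l)}$ of \eqref{binom}, and observe that the product vanishes because one factor is $1-q^0$. Your added paragraph making explicit the pairing argument for non-freeness (via $\tau_s(1)=0$) is a welcome detail that the paper leaves implicit.
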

\begin{proof} Use \eqref{tau} and \eqref{cher} to calculate
\begin{eqnarray*}
\tau_s(\tr E[1]) &=& \sum_{m=1}^l (-1)^m q^{m(m-1)}\left(1-q^{-2ml}\right)\binom  l m_{\! q^2} \frac{q^{2ms}}{1-q^{2ml}}\\
&=& - \sum_{m=1}^l (-1)^{m} q^{m(m-1)}\binom  l m_{\! q^2} q^{2m(s-l)}\\
&=& 1- \sum_{m=0}^l (-1)^{m} q^{m(m-1)}\binom  l m_{\! q^2} q^{2m(s-l)}\\
&=& 1 - \prod_{m=1}^l(1-q^{2(s-l+m-1)}) =1 - \prod_{m=1}^l(1-q^{2(s-m)}) = 1.
\end{eqnarray*}
The last equality follows from the observation that since $s=1,2,\ldots ,l$, one of the factors in the product must vanish. The fourth equality is a consequence of the definition of q-binomial coefficients \eqref{binom}.
\end{proof}

\section{Algebras of continuous functions on quantum  teardrops and their $K$-theory}\label{sec.top} 
\setcounter{equation}{0}
The $C^*$-algebra $C(\WW\PP_q(k,l))$ algebra of continuous functions on the quantum weighted projective space $\WW\PP_q(k,l)$ is  defined as the universal $C^*$-algebra for relations \eqref{wpkl}. As a concrete algebra, $C(\WW\PP_q(k,l))$ is the subalgebra of bounded operators on the Hilbert space $\bigoplus_{s=1}^{l}V_s$ obtained as the completion of $\bigoplus_{s=1}^{l} \pi_s\left( \cO(\WP_q(k,l))\right)$; see Proposition~\ref{prop.rep}. In this section we show that this $C^*$-algebra  is isomorphic to the direct sum of compact operators with adjoined  identity.
\begin{proposition}\label{prop.seq}
Let $\cK_s$ denote the algebra of all compact operators on the Hilbert space $V_s$. There is a split-exact sequence of $C^*$-algebra maps
\begin{equation}\label{seq}
\xymatrix{0 \ar[r] & \bigoplus_{s=1}^{l}\cK_s \ar[r] & C(\WW\PP_q(k,l)) \ar[r] & \CC \ar[r] & 0.}
\end{equation}
\end{proposition}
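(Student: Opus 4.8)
The plan is to work entirely inside the concrete realization. Write $\Pi := \bigoplus_{s=1}^{l}\pi_s$ for the faithful representation on $\mathcal{H} := \bigoplus_{s=1}^{l}V_s$ furnished by Proposition~\ref{prop.rep}, and let $C := C(\WW\PP_q(k,l))$ be the norm closure of $\Pi\!\left(\cO(\WP_q(k,l))\right)$ in $\mathcal{B}(\mathcal{H})$. Since each generator acts block-diagonally with respect to $\mathcal{H}=\bigoplus_s V_s$, every element of $C$ is block-diagonal, and $C = C^*(\Pi(a),\Pi(b),I)$. Writing $J := \bigoplus_{s=1}^{l}\cK_s$ for the $C^*$-algebra of block-diagonal compact operators on $\mathcal{H}$, I would establish the proposition through three facts: (a) $J\subseteq C$; (b) $J$ is a closed two-sided ideal of $C$ with $C/J\cong\CC$; and (c) the quotient map is split by the unital embedding $\CC\to C$, $\lambda\mapsto\lambda I$.

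The crux is (a). As $a$ is self-adjoint, $\Pi(a)$ admits continuous functional calculus inside $C$. By \eqref{reps}, $\Pi(a)$ is diagonal with eigenvalues $q^{2(lp+s)}$ indexed by $(p,s)\in\NN\times\{1,\dots,l\}$, and the key observation is that $(p,s)\mapsto lp+s$ is a bijection onto $\{1,2,3,\dots\}$ (division of $n-1$ by $l$); hence these eigenvalues are pairwise distinct and accumulate only at $0$. Consequently each $q^{2(lp+s)}$ is an isolated point of $\spec(\Pi(a))$ whose spectral projection is the rank-one projection $P_p^s$ onto $e_p^s$, and choosing a continuous function equal to $1$ there and $0$ on the rest of the spectrum shows $P_p^s\in C$. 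To produce matrix units I would then use $\Pi(b)$: by \eqref{reps} one has $P_{p-1}^s\,\Pi(b)\,P_p^s = w_p^s\,E_{p-1,p}^s$, where $E_{p-1,p}^s$ is the rank-one operator sending $e_p^s$ to $e_{p-1}^s$ and annihilating the other basis vectors, and $w_p^s = q^{k(lp+s)}\prod_{r=1}^{l}\bigl(1-q^{2(lp+s-r)}\bigr)^{1/2}$. For $p\geq 1$ one has $lp+s-r\geq s\geq 1$, so each factor lies in $(0,1)$ and $w_p^s\neq 0$; thus $E_{p-1,p}^s=(w_p^s)^{-1}P_{p-1}^s\Pi(b)P_p^s\in C$, and by taking adjoints and products one obtains every within-block matrix unit $E_{i,j}^s$. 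Their closed linear span is exactly $J$, giving $J\subseteq C$. This is the main obstacle, since it is precisely here that the spectral data of the teardrop representations are used: distinctness of the eigenvalues of $a$ makes the spectral projections rank one, and non-vanishing of the weights of $b$ supplies the matrix units.

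For (b), $J$ is an ideal because $C$ is block-diagonal and, within each block, a bounded operator times a compact operator is compact; it is closed by construction. For the quotient, note that $\Pi(a)$ (diagonal with eigenvalues tending to $0$) and $\Pi(b)$ (a weighted shift in each block with weights tending to $0$) are themselves block-diagonal compact operators, hence lie in $J$. Since $C=C^*(\Pi(a),\Pi(b),I)$ and the images of $a$ and $b$ vanish in $C/J$, the quotient is generated by the image of $I$, so it is a quotient of $\CC$; it is nonzero because $J$ is proper ($I$ is not compact on the infinite-dimensional $\mathcal{H}$), whence $C/J\cong\CC$. The resulting surjection $C\to\CC$ sends $a,b\mapsto 0$ and $I\mapsto 1$, and is split by the unital $*$-homomorphism $\lambda\mapsto\lambda I$. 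Assembling (a)--(c) yields the split-exact sequence \eqref{seq}.
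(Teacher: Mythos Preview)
Your proof is correct and follows essentially the same route as the paper: both arguments use continuous functional calculus on the pairwise-distinct eigenvalues of $\Pi(a)$ to obtain the rank-one projections $P_p^s$, then exploit that $\Pi(b)$ acts as a weighted shift with nonzero weights to manufacture the remaining matrix units and hence all of $\bigoplus_s\cK_s$; and both observe that $\Pi(a),\Pi(b)$ are themselves compact so the quotient by the compact ideal is $\CC$. Your write-up is a bit more explicit about the matrix-unit construction and the logical structure of the exact sequence, while the paper additionally displays the projections $P_p^s$ as explicit norm-limits of polynomials in $\pi_s(a)$, but these are only presentational differences.
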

\begin{proof} We use a method of proof similar to that of \cite[Proposition~1.2]{She:poi}. Write $\pi^\oplus$ for $\bigoplus_{s=1}^{l} \pi_s$. A basis for $\bigoplus_{s=1}^{l}V_s$ consists of eigenvectors $e_p^s$ of $\pi^\oplus(a)$ with distinct eigenvalues $q^{2(lp+s+1)}$. Since, for all $s$, $q^{2(lp+s+1)}\to 0$ as $p\to \infty$, $\pi^\oplus(a)$ is a compact operator. Similarly,  matrix coefficients $q^{lp+s+1} \prod_{r=0}^{l-1}\left(1 - q^{2(lp+s-r)}\right)^{1/2}$ of $\pi_s(b)$ tend to $0$ as $p$ tends to infinity, hence also $\pi^\oplus(b)$ is a compact operator; compare the proof of Proposition~\ref{prop.Fred}. This proves that the kernel of the projection of $C(\WW\PP_q(k,l))$ on the identity component $\CC$ contains only compact operators. 

The spectrum of  $\pi^\oplus(a)$ consists of distinct numbers
$$
\spec\left(\pi^\oplus(a)\right)= \{0\} \cup \{q^{2(lp+s)} \; |\; s=1,2,\ldots l,\; p\in \NN\};
$$ 
see  the proof of Proposition~\ref{prop.rep}. By functional calculus, for any $s$ and $p$  there are operators $f_{p,s}(\pi^\oplus(a))$ in $C(\WW\PP_q(k,l))$
 with spectrum given by  
$$
f_{p,s} : \spec\left(\pi^\oplus(a)\right) \to \CC, \qquad 0\mapsto 0, \qquad q^{2(ln+t)}\mapsto \delta_{s,t}\delta_{p,n}.
$$
Hence $C(\WW\PP_q(k,l))$ contains all orthogonal projections $P_p^s$ to one-dimensional spaces spanned by the $e_p^s$. More explicitly, these are obtained as limits:
\begin{equation}\label{pps}
q^{-2ns}\prod_{r=0,\; r\neq p}^n \frac{\pi_s(a)-q^{2(lr+s)}}{q^{2lp}-q^{2lr}}\xymatrix{ \ar[rr]_-{n\to \infty} && P_p^s.}
\end{equation}
Next, noting that $\pi_s(b)$ and $\pi_s(b^*)$ are shift-by-one operators with non-zero coefficients all the remaining generators of $\cK_s$ (and hence of $\bigoplus_{s=1}^{l}\cK_s$) can be obtained as products  of the rescaled   $\pi_s(b)$  and $\pi_s(b^*)$. Finally, the first map in the sequence \eqref{seq} is injective since all the $\pi_s$ are faithful representations.
\end{proof}

The following corollaries are straightforward consequences of Proposition~\ref{prop.seq}.

\begin{corollary}\label{cor.alg}
The $C^*$-algebra $C(\WW\PP_q(k,l))$ is isomorphic to the direct sum of $l$-copies of algebras of compact operators with the adjoined identity.
\end{corollary}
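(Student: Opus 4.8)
The plan is to read the statement off directly from the split-exact sequence established in Proposition~\ref{prop.seq}, the whole content of the corollary being already encoded there. Write $A = C(\WW\PP_q(k,l))$ and $J = \bigoplus_{s=1}^{l}\cK_s$. Proposition~\ref{prop.seq} gives a split-exact sequence of $C^*$-algebras
$$
0 \longrightarrow J \longrightarrow A \stackrel{\rho}{\longrightarrow} \CC \longrightarrow 0,
$$
where $\rho$ is the projection onto the identity component. Since $A$ is unital, the splitting is the canonical unital $*$-homomorphism $\iota:\CC\to A$, $\lambda\mapsto\lambda\,1_A$, which visibly satisfies $\rho\circ\iota=\id$; this is the section whose existence makes the sequence split.

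First I would use the section to decompose $A$ as a vector space. Because $\rho\circ\iota=\id$ and $\ker\rho = J$, every $a\in A$ has a unique expression $a = j + \lambda\,1_A$ with $j\in J$ and $\lambda=\rho(a)\in\CC$, so $A = J\oplus\CC\,1_A$. The algebra structure is then forced: since $1_A$ is the unit and $J$ is a two-sided ideal,
$$
(j_1+\lambda_1 1_A)(j_2+\lambda_2 1_A) = \bigl(j_1 j_2 + \lambda_1 j_2 + \lambda_2 j_1\bigr) + \lambda_1\lambda_2\,1_A,
$$
which is precisely the multiplication rule of the minimal unitalization $J^{+}$. As $J = \bigoplus_{s=1}^{l}\cK_s$ is non-unital (each $\cK_s$ consists of compact operators on the infinite-dimensional space $V_s$ and contains no identity), $J^{+}$ is genuinely the one-point unitalization, and $j+\lambda 1_A\mapsto (j,\lambda)$ is the required $*$-isomorphism $A\cong J^{+}$. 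Recording finally that each $\cK_s\cong\cK\bigl(l^2(\NN)\bigr)$ identifies $A\cong\bigl(\bigoplus_{s=1}^{l}\cK_s\bigr)^{+}$ as the direct sum of $l$ copies of the compact operators with a single adjoined identity.

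I do not expect a serious obstacle, since everything substantial lives in Proposition~\ref{prop.seq}; the only point demanding care is the reading of the statement itself. One must adjoin a \emph{single} identity to the whole direct sum rather than unitalize each summand separately: the latter would produce the quotient $\CC^{l}$, whereas our sequence has quotient $\CC$. The splitting guarantees the correct reading, because $1_A$ acts as the identity multiplier on all of $J$ at once, so the extension is canonically the minimal unitalization $J^{+}$ and nothing larger.
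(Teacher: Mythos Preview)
Your proposal is correct and follows exactly the route the paper intends: the paper gives no separate proof of the corollary, simply declaring it a straightforward consequence of Proposition~\ref{prop.seq}, and what you have written is precisely the unpacking of that consequence via the standard identification of a split extension of $\CC$ by a non-unital ideal with the minimal unitalization. Your closing remark distinguishing $\bigl(\bigoplus_s\cK_s\bigr)^{+}$ from $\bigoplus_s\cK_s^{+}$ is a welcome clarification of the statement's slightly ambiguous wording.
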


\begin{corollary}\label{cor.k}
The $K$-groups of $C(\WW\PP_q(k,l))$ are:
$$
K_0\left(C(\WW\PP_q(k,l))\right) = \ZZ^{l+1}, \qquad K_1\left(C(\WW\PP_q(k,l))\right) =0.
$$
\end{corollary}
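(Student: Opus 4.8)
The plan is to apply the $K$-theory functor to the split-exact sequence of Proposition~\ref{prop.seq} and to read off both groups from the standard properties of operator $K$-theory. First I would record the $K$-groups of the outer terms. For the algebra of compact operators on a separable Hilbert space one has $K_0(\cK)=\ZZ$ and $K_1(\cK)=0$, while for the scalars $K_0(\CC)=\ZZ$ and $K_1(\CC)=0$. Since $K$-theory is additive on finite direct sums, the ideal $\bigoplus_{s=1}^{l}\cK_s$ satisfies $K_0\big(\bigoplus_{s=1}^{l}\cK_s\big)=\ZZ^{l}$ and $K_1\big(\bigoplus_{s=1}^{l}\cK_s\big)=0$.

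Next I would use the fact that operator $K$-theory is split-exact. Because the surjection $C(\WW\PP_q(k,l))\to\CC$ in Proposition~\ref{prop.seq} admits a $*$-algebra section, applying $K_i$ yields short split-exact sequences
$$
0\longrightarrow K_i\Big(\bigoplus_{s=1}^{l}\cK_s\Big)\longrightarrow K_i\big(C(\WW\PP_q(k,l))\big)\longrightarrow K_i(\CC)\longrightarrow 0,\qquad i=0,1,
$$
so that $K_i\big(C(\WW\PP_q(k,l))\big)\cong K_i\big(\bigoplus_{s}\cK_s\big)\oplus K_i(\CC)$. Substituting the values above gives $K_0\big(C(\WW\PP_q(k,l))\big)\cong\ZZ^{l}\oplus\ZZ=\ZZ^{l+1}$ and $K_1\big(C(\WW\PP_q(k,l))\big)\cong 0$. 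Alternatively, one may run the six-term exact sequence attached to Proposition~\ref{prop.seq}; there both connecting homomorphisms vanish automatically because $K_1\big(\bigoplus_{s}\cK_s\big)=0=K_1(\CC)$, leaving precisely the displayed short exact sequence for $i=0$, which splits since $\ZZ$ is free, and forcing $K_1=0$.

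There is essentially no obstacle at this stage: all of the analytic work has already been carried out in Proposition~\ref{prop.seq}, where $C(\WW\PP_q(k,l))$ is identified as an extension of $\CC$ by $\bigoplus_{s=1}^{l}\cK_s$ together with an explicit splitting, and in Corollary~\ref{cor.alg}. The remaining computation is purely formal. The only point worth flagging is the justification that the relevant connecting maps vanish, which is immediate here either from split-exactness of $K$-theory or, even more directly, from the vanishing of the two $K_1$-groups of the outer terms.
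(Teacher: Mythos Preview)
Your argument is correct and is exactly the approach taken in the paper: both deduce the $K$-groups from Proposition~\ref{prop.seq} together with the standard values $K_0(\cK)=K_0(\CC)=\ZZ$ and $K_1(\cK)=K_1(\CC)=0$. You have simply spelled out in more detail (additivity on direct sums, split-exactness/six-term sequence) what the paper condenses into a single sentence.
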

\begin{proof}
This follows immediately from Proposition~\ref{prop.seq} by recalling that $K_0(\cK) = K_0(\CC) = \ZZ$ and $K_1(\cK) = K_1(\CC) = 0$, where $\cK$ is the $C^*$-algebra of compact operators on a separable Hilbert space.
\end{proof}

The first of the $\ZZ$ in $K_0\left(C(\WW\PP_q(k,l))\right) $ corresponds to the rank of free modules, the remaining ones are generated by the classes of projections $P^s_0$; see \eqref{pps}. The cyclic cocycles $\tau_s$ constructed in Proposition~\ref{prop.Fred} (see \eqref{tau}) extend to cocycles on $C(\WW\PP_q(k,l)$. Since, for any $x$ that is not a multiple of identity,  $\tau_s(x) = \tr (\pi_s(x))$ we immediately conclude that $\tau_s(P_0^s) = 1$, and the index pairing between the K-theory and cyclic cohomology of $C(\WW\PP_q(k,l)$ is given by
$$
\langle [\tau_s] , [P_0^t] \rangle = \delta_{s,t} \tau_s(P_0^t) = \delta_{s,t}.
$$

  \section*{Acknowledgments}
 The research of TB is  partially supported by the European Commission grant 
PIRSES-GA-2008-230836 and  the Polish Government grant 1261/7.PR UE/2009/7.

\end{document}